
\documentclass[11pt]{article}
\usepackage{amsmath,amsthm,amsfonts}
\usepackage[numbers]{natbib}
\usepackage[colorlinks,ps2pdf]{hyperref}
\usepackage[numbers]{natbib}
\usepackage{graphicx,psfrag}
\usepackage{enumerate}

\theoremstyle{plain}
\newtheorem{theorem}{Theorem}
\newtheorem{lemma}{Lemma}
\newtheorem{proposition}{Proposition}
\theoremstyle{definition}
\newtheorem{definition}{Definition}
\newtheorem{example}{Example}

\newcommand{\R}{\mathbb{R}}
\newcommand{\Z}{\mathbb{Z}}
\newcommand{\geom}{\operatorname{geom}}
\newcommand{\reff}[1]{(\ref{#1})}
\newcommand{\fref}[1]{(F\ref{#1})}
\newcommand{\sE}{\operatorname{E}}
\newcommand{\sP}{\operatorname{P}}

\begin{document}
\title{Stabilization of an overloaded queueing network\\
       using measurement-based admission control\thanks{First published in Journal of Applied Probability
       43(1):231--244. \copyright\ 2006 by the Applied Probability Trust.}}

\author{Lasse Leskel\"a\thanks{
 Institute of Mathematics, Helsinki University of Technology,
 P.O. Box 1100, FI-02015 TKK, Finland. \url{http://www.iki.fi/lsl/}}
}

\date{}

\maketitle

\begin{abstract}
 Admission control can be employed to avoid congestion in queueing
 networks subject to overload. In distributed networks the admission
 decisions are often based on imperfect measurements on the network
 state.  This paper studies how the lack of complete state
 information affects the system performance by considering a simple
 network model for distributed admission control.  The stability
 region of the network is characterized and it is shown how feedback
 signaling makes the system very sensitive to its parameters.
\end{abstract}

\

\noindent
{\bf Keywords:} queueing network, admission control, stability, overload, perturbed Markov process

\

\noindent
{\bf AMS Subject Classification:} 60K25, 68M20, 90B15, 90B22

\section{Introduction}
Consider an overloaded queueing network where the incoming traffic
exceeds the service capacity over a long time period. In this case it
is often necessary to employ admission control to avoid the network to
become fully congested. Many networks of practical interest are
composed of subnetworks, not all of which are administered by a single
party.  In such a network the admission controller seldom has complete
up-to-date system information available. Instead, the admission
decisions must be based on partial measurements on the network state.

This paper studies the effect of imperfect information to the
performance of the admission control scheme.  Typical performance
measures for well-dimensioned networks in this kind of setting include
the average amount of rejected traffic per unit time, and the mean
proportion of time the network load is undesirably high. However,
assuming the network under study is subjected to long-term overload,
there is another performance criterion that must first be analyzed,
namely: \emph{If the network is subjected to a stationary load
  exceeding the service capacity, how strict admission control rules
  should one set in order to stabilize the system?}

To deal with the question mathematically, it is assumed that the
network can be modeled using the simplest nontrivial model for a
distributed network, the two-node tandem network with independent and
exponential service times and unlimited buffers.  The network state is
denoted by $X=(X_1,X_2)$ where $X_i$ is the number of jobs in
node~$i$.  It is assumed that the admission control can be modeled so
that the input to the system is a Poisson process with a stochastic
time-varying intensity, the intensity $\lambda=\lambda(X)$ being a
function of the network state.

The lack of complete state information is reflected in the model by
assuming that the input rate $\lambda$ is a function of only one of
the $X_i$.  If $\lambda(X) = \lambda(X_1)$, then the analysis of the
system can be reduced to the study of birth--death processes, which
are well understood.  This is why in the following it is always
assumed that $\lambda(X) = \lambda(X_2)$, so that the admission
control introduces a feedback signaling loop to the system. For
example, one can model a network where arriving traffic is rejected
when the size of the second buffer exceeds a threshold level $K$ by
setting $\lambda(X) = 1(X_2\leq K)$, see Figure~\ref{fig:systemGraph}.
In order to also cover more complex admission policies with multiple
thresholds and thinning of input traffic, the shape of $\lambda(X_2)$
will not be restricted in any way.

\begin{figure}[h]
  \begin{center}
    \psfrag{1(X2<=K)}{$1(X_2\leq K$)}
    \psfrag{X1}{$X_1$}
    \psfrag{X2}{$X_2$}
    \psfrag{K}{$K$}
    \psfrag{AC}{$AC$}
    \includegraphics[width=.9\textwidth]{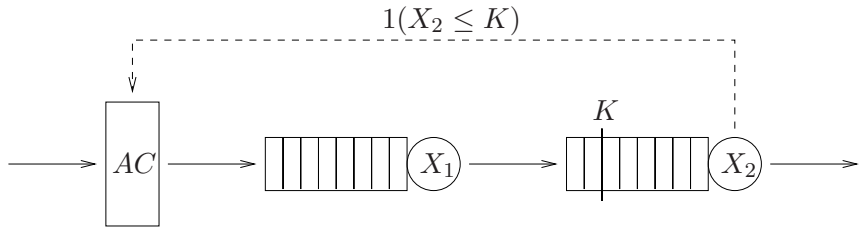}
    \caption{Admission control based on the single threshold level $K$.}
    \label{fig:systemGraph}
  \end{center}
\end{figure}

More precisely, $X$ is defined as a continuous-time stochastic process
as follows. Let $\lambda$ be a nonnegative function on $\Z_+$ and
$\mu_1,\mu_2>0$. Define the transition rates $q(x,y)$ for $x\neq y$,
$x,y\in\Z_+^2$ by
\begin{equation}
  \label{eq:qDef}
  q(x,y) = \left\{
    \begin{aligned}
      \lambda(x_2), \quad &y = x+e_1,\\
      \mu_1       , \quad &y = x-e_1+e_2\geq 0,\\
      \mu_2       , \quad &y = x-e_2\geq 0,\\
      0           , \quad &\text{otherwise},
    \end{aligned}
  \right.
\end{equation}
where $e_i$ denotes the $i$-th unit vector of $\Z_+^2$. As usual, set
$q(x,x)=-q(x)$, where the transition rate out of state $x$ is defined by
\[
q(x) = \sum_{y\neq x} q(x,y).
\]
It is clear that $q(x)<\infty$ for all $x$, so using the minimal
construction~\cite{asmussen2003,bremaud1999} the rates $q(x,y)$ define
a unique Markov process $X$ on $\Z_+^2\cup\{\kappa\}$.  Here $\kappa$
denotes an additional state not in $\Z_+^2$ with $T_\kappa =
\inf\{t>0: X(t)=\kappa\} \leq \infty$ being the time of explosion of
$X$.  The notation $S(\lambda,\mu_1,\mu_2)$ will be used for the set
of transition rates corresponding to the the triple
$(\lambda,\mu_1,\mu_2)$, and the system $S(\lambda,\mu_1,\mu_2)$ is
said to be \emph{stable} if the corresponding Markov process is
ergodic, that is, irreducible and positive recurrent.

In its most general form, the stability problem may now be stated as
\begin{description}
\item [(P1)] Characterize the set of all $(\lambda,\mu_1,\mu_2)\in
  \R_+^{\Z_+}\times\R_+\times\R_+$ for which $S(\lambda,\mu_1,\mu_2)$
  is stable.
\end{description}
Specializing to networks with threshold-based admission control, the
offered traffic is assumed to arrive at unit rate, without loss of
generality.  Denoting the admission threshold by $K$,~(P1) now takes
the form
\begin{description}
\item[(P2)] For each $(\mu_1,\mu_2) \in \R_+^2$,
  determine for which values of $K\in\Z_+\cup \{\infty\}$, if any, the
  system $S(\, 1(\cdot\leq K), \, \mu_1,\mu_2)$ is stable.
\end{description}
Note that the system corresponding to $K=\infty$ in~(P2) is the
ordinary tandem queue, for which it is well-known that
$\min(\mu_1,\mu_2)>1$ is sufficient and necessary for stability. On
the other hand, assuming overload, answering the question on the
existence of a threshold level that can stabilize the system is not as
straightforward.

The queueing systems literature includes a vast amount of work on
various admission control mechanisms.  However, most earlier studies
on tandem networks require at least one of the buffers to be finite,
so that the two-dimensional nature of the problem can partly be
reduced to one-dimensional by applying matrix-geometric
methods~\cite{neuts1981}.  For networks with unlimited buffers and
state-dependent service times, Bambos and Walrand~\cite{bambos1989}
provide stability results extending to non-Markovian systems, however
ruling out networks with the type of feedback signaling loop present
here.  Concerning the network $S(\lambda,\mu_1,\mu_2)$ defined above,
the compensation approach introduced by Adan, Wessels, and
Zijm~\cite{adan1993} can be used for computing the invariant measure
in the special case where $\lambda$ is constant on
$\{n\in\Z_+:n\geq1\}$. For more general input rates, Leskel\"a and
Resing~\cite{leskela2007} have described a numerical method for
calculating stationary performance characteristics of the system.
Altman, Avrachenkov, and N\'u\~nez Queija~\cite{altman2004} have
recently introduced perturbation techniques that seem appropriate for
asymptotically analyzing the behavior of $S(\lambda,\mu_1,\mu_2)$
under suitable parameter scaling.

This paper partially answers~(P1) by deriving sufficient and necessary
conditions for stability. Furthermore, by showing that in the special
case of threshold-based admission control the sufficient and necessary
conditions coincide, a complete solution of~(P2) is given.  In
addition, the sensitivity of the system is analyzed with respect to
changes in the service rates and it is shown how acceleration of one
of the servers may, rather paradoxically, destabilize the system.

\section{A sufficient condition for stability}
Let $S$ be a countable set.  For a function $V: S\to\R$,
denote
\[
\lim_{x\to\infty} V(x) = \infty
\]
if the set $\{x:V(x)\leq M\}$ is finite for all $M\in\R$. Further, the
mean drift of $V$ with respect to transition rates $q(x,y)$ is denoted
by
\begin{equation}
  \label{eq:driftDef}
  \Delta V(x) = \sum_{y\neq x} (V(y)-V(x)) \, q(x,y),
\end{equation}
assuming the sum on the right-hand side converges.

\begin{definition}
  A map $V:S\to\R$ is called a \emph{Lyapunov function} for $q$ if it
  satisfies the following conditions called \emph{Foster's criteria}:
  \begin{enumerate}[(F1)]
  \item \label{FosterFin} $\sum_{y\neq x} |V(y)-V(x)| \, q(x,y) < \infty$ for all $x$ (so that
    the right-hand side of~\reff{eq:driftDef} makes sense).
  \item \label{FosterLim} $\lim_{x\to\infty} V(x) = \infty$.
  \item \label{FosterDrift} There is a finite set $S_0\subset S$ such
    that $\, \sup_{x\in S\setminus S_0} \Delta V(x)<0$.
  \end{enumerate}
\end{definition}

The following continuous-time analogue of Foster's classical
theorem~\cite{foster1953} provides a sufficient condition for
stability.
\begin{theorem}[Tweedie~\cite{tweedie1975}]
  \label{the:foster}
  Let $X$ be an irreducible Markov process on a countable state space
  $S$ generated by transition rates $q(x,y)$ so that $q(x)<\infty$ for
  all $x$. The existence of a Lyapunov function for $q$ is then
  sufficient for $X$ to be ergodic.
\end{theorem}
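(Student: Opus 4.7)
The plan is to adapt the discrete-time Foster argument to the continuous-time setting: use the Lyapunov function $V$ as a supermartingale-type process outside the finite set $S_0$, deduce a finite expected hitting time of $S_0$, and then combine this with the finiteness of $S_0$ and irreducibility to obtain positive recurrence (and along the way, non-explosion).

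First, by \fref{FosterLim} the sublevel sets $\{V\leq M\}$ are finite, hence $V$ is bounded below; adding a constant I may assume $V\geq 0$. Set $\epsilon = -\sup_{x\in S\setminus S_0}\Delta V(x) > 0$ using \fref{FosterDrift}, and let $\tau_0 = \inf\{t > 0: X(t)\in S_0\}$. The central step is to apply Dynkin's formula to the local martingale $M_t = V(X(t)) - V(X(0)) - \int_0^t \Delta V(X(s))\,ds$ stopped at $t\wedge \tau_0\wedge\sigma_n$, where $\sigma_n = \inf\{t: V(X(t))\geq n\}\wedge n$ is a localizing sequence chosen to avoid explosion and to keep the integrand controlled. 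Optional stopping, combined with $\Delta V\leq -\epsilon$ on $S\setminus S_0$ and $V\geq 0$, yields
\[
  \epsilon\,\sE_x[t\wedge \tau_0\wedge \sigma_n] \leq V(x).
\]
Letting $n\to\infty$ and then $t\to\infty$ (monotone convergence on the right, Fatou on the left) gives $\sE_x[\tau_0]\leq V(x)/\epsilon < \infty$ for every $x\in S$.

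From here the conclusion is essentially routine. Finiteness of $\sE_x[\tau_0]$ forces $\sP_x(\tau_0<T_\kappa)=1$, so the process reaches $S_0$ before explosion almost surely. Because $S_0$ is finite and $X$ is irreducible, at least one $x_0\in S_0$ is visited infinitely often starting from any initial state, and splitting trajectories at successive visits to $x_0$ identifies the return time to $x_0$ with $\tau_0$ plus a bounded excursion inside the finite set $S_0$; the above bound then shows $\sE_{x_0}[\text{return time to }x_0]<\infty$, so $x_0$ is positive recurrent. Irreducibility propagates positive recurrence (and non-explosion) to every state of $S$, and ergodicity of $X$ follows.

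The main obstacle is the careful handling of explosion and the justification of optional stopping when $V$ is unbounded. Criterion \fref{FosterFin} ensures that $\Delta V$ and the compensator $\int_0^t \Delta V(X(s))\,ds$ are well defined, but to pass to a limit in Dynkin's identity one genuinely needs the localization $\sigma_n$: the choice $\sigma_n=\inf\{t:V(X(t))\geq n\}\wedge n$ works because \fref{FosterLim} makes $\{V\leq n\}$ finite, so on $[0,\sigma_n]$ the process $X$ lives on a finite set where classical Markov-process tools apply unambiguously. One must then check that $\sigma_n\to\infty$ almost surely on $\{\tau_0=\infty\}$ (otherwise $V(X(\cdot))$ would blow up in finite time, contradicting the negative drift) and combine Fatou and monotone convergence to get the hitting-time bound. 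This is precisely the point where all three Foster conditions are used simultaneously, and it is the part of the proof that requires genuine care; the remaining steps reduce to classical results on irreducible Markov chains on countable state spaces.
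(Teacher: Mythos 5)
The paper states Theorem~\ref{the:foster} as a known result of Tweedie and gives no proof of it, so there is no in-text argument to compare against; I evaluate your sketch on its own terms. Your Foster--Lyapunov / Dynkin approach is the standard and correct one: the estimate $\epsilon\,\sE_x[t\wedge\tau_0\wedge\sigma_n]\le V(x)$ is right, the localizing sequence $\sigma_n$ built from the finite sublevel sets of $V$ (using~\fref{FosterLim}) is exactly what is needed to make optional stopping legitimate while~\fref{FosterFin} keeps the compensator well defined, and the same inequality, through $\sE_x[V(X(t\wedge\tau_0\wedge\sigma_n))]\le V(x)$ together with $V(X(\sigma_n))\ge n$, gives a Markov-inequality argument ruling out explosion before the hit of $S_0$, which is the point you gesture at.

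One step is stated too loosely to stand as written: the claim that the return time to $x_0\in S_0$ is ``$\tau_0$ plus a bounded excursion inside the finite set $S_0$''. Starting from $x_0$, the path typically leaves and re-enters $S_0$ many times before returning to $x_0$, and those intermediate excursions are \emph{not} inside $S_0$. What the Dynkin bound actually gives is $\sE_y[\tau_0]\le V(y)/\epsilon$ for $y\notin S_0$, hence, via~\fref{FosterFin}, a finite expected time to re-enter $S_0$ after each jump out of it. To convert this into a finite expected return time to the single state $x_0$ you still need to control the number of such excursions, for instance by observing that the embedded jump chain restricted to visits of the finite set $S_0$ is irreducible (by irreducibility of $X$) and therefore has finite expected return time to $x_0$, and then invoking a Wald-type argument; alternatively, simply cite the criterion ``finite expected hitting time of a finite set, plus irreducibility, implies positive recurrence'' (Meyn 1993, Theorems 4.3(ii) and 4.4), which is precisely the reduction the paper itself uses at the end of the proof of Lemma~\ref{lem:ergodicEquiv}. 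Once you either fill in that step or cite it, the proof is complete.
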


Considering the system $S(\lambda,\mu_1,\mu_2)$, let $q(x,y)$ be as
defined in~\reff{eq:qDef}.  Assume $V$ is a function on $\Z_+^2$ of
the form $V(x) = x_1 + v(x_2)$ for some $v:\Z_+\to\R$ with $v(0)=0$.
Searching for a Lyapunov function of this type, let us fix a number
$r>0$ and require that the mean drift of $V$ with respect to $q$
satisfies
\begin{equation}
  \label{eq:VRequirement}
  \Delta V(x) = -r \quad \text{for all $x$ with $x_1>0$}.
\end{equation}
It is straightforward to verify that~\reff{eq:VRequirement} is
equivalent to
\begin{align*}
  v(1)   &= 1-(\lambda(0)+r)/\mu_1,\\
  v(n+1) &= 1-(\lambda(n)+r)/\mu_1 + (1+\mu_2/\mu_1)v(n) - \mu_2/\mu_1 v(n-1), \quad n\geq 1.
\end{align*}
Denoting $\alpha(n)=1-(\lambda(n)+r)/\mu_1$ and $w(n)=v(n+1)-v(n)$,
the above difference equation can be written as $w(n) = \alpha(n) +
\mu_2/\mu_1 \, w(n-1)$ for $n\geq 1$, with $w(0)=\alpha(0)$. Thus,
$w(n) = \sum_{k=0}^n \alpha(k) \, (\mu_1/\mu_2)^{k-n}$, so that
\[
v(n) = \sum_{j=0}^{n-1} w(j)
= \sum_{j=0}^{n-1}\sum_{k=0}^j \alpha(k) \, (\mu_1/\mu_2)^{k-j},
\]
and we conclude that~\reff{eq:VRequirement} defines for each $r>0$ the
function
\[
V_r(x) = x_1 + \sum_{j=0}^{x_2-1} \sum_{k=0}^{j} (1-(\lambda(k)+r)/\mu_1)) \, (\mu_1/\mu_2)^{k-j}.
\]
Thus we have constructed a family of functions $\mathcal{V} = \{V_r:
r>0\}$ whose elements satisfy $\sup_{x:x_1>0} \Delta V_r(x)<0$, so
there are hopes that $V_r$ might satisfy~\fref{FosterDrift} for a
suitably chosen finite subset of $\Z_+^2$. In order to investigate
whether this is the case, let us study the mean drift of $V_r$ for
$x=(0,n)$ with $n\geq 1$,
\begin{equation}
  \label{eq:boundaryDrift}
  \Delta V_r(0,n) = \lambda(n) - \mu_2(v_r(n)-v_r(n-1)).
\end{equation}

\begin{definition}
  For $z\geq 0$, denote $Z_n\sim\geom_n(z)$ if $Z_n$ is a random
  variable on $\Z\cap [0,n]$ with $\sP(Z_n=j) = c z^j$. For $0\leq z
  \leq 1$, denote $Z\sim\geom(z)$ if the random variable $Z$ on
  $\Z_+$ satisfies $P(Z=j) = (1-z) z^j$.
\end{definition}
In this paper, $Z_n$ and $Z$ will always represent generic random
variables with distributions $\geom_n(\mu_1/\mu_2)$ and
$\geom(\mu_1/\mu_2)$, respectively. Using this notation, one may
verify that~\reff{eq:boundaryDrift} can be alternatively written as
\begin{equation}
  \label{eq:boundaryDrift2}
  \Delta V_r(0,n) = \frac{\sE\lambda(Z_n)-\mu_2(1-r/\mu_1) \,
  \sP(Z_n>0)}{\sP(Z_n=n)}, \quad Z_n\sim\geom_n(\mu_1/\mu_2).
\end{equation}

\begin{theorem}
  \label{the:suffCond}
  The family $\mathcal{V}=\{V_r:r>0\}$ contains a Lyapunov function for
  $S(\lambda,\mu_1,\mu_2)$ if and only if
  \begin{equation}
    \label{eq:suffCond}
    \varlimsup \sE\lambda(Z_n) < \min(\mu_1,\mu_2), \quad Z_n\sim\geom_n(\mu_1/\mu_2).
  \end{equation}
  In particular, if $\lambda(0)>0$, then~\reff{eq:suffCond} is
  sufficient for the stability of $S(\lambda,\mu_1,\mu_2)$.
\end{theorem}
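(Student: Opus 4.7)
The construction of $V_r$ was engineered so that $\Delta V_r(x)=-r$ whenever $x_1>0$, which means Foster's drift criterion \fref{FosterDrift} reduces to the behaviour of the boundary drifts $\{\Delta V_r(0,n):n\ge 0\}$ given explicitly by \reff{eq:boundaryDrift2}; criterion \fref{FosterFin} is automatic because each state has only finitely many neighbours. A single observation handles \fref{FosterLim} once \fref{FosterDrift} is in hand: the identity $\Delta V_r(0,n)=\lambda(n)-\mu_2(v_r(n)-v_r(n-1))$ combined with $\Delta V_r(0,n)\le -c<0$ for all large $n$ forces $v_r(n)-v_r(n-1)\ge c/\mu_2$ eventually, so $v_r(n)\to\infty$ and consequently every sublevel set of $V_r$ is finite.

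The analytical core is the asymptotic identity
\[
\lim_{n\to\infty}\mu_2\,\sP(Z_n>0)=\min(\mu_1,\mu_2),
\]
which I would verify directly from the explicit normalizing constant of $\geom_n(\mu_1/\mu_2)$ in the three regimes $\mu_1<\mu_2$, $\mu_1=\mu_2$, $\mu_1>\mu_2$. Since $\sP(Z_n=n)>0$, this identity together with \reff{eq:boundaryDrift2} reduces the sign of $\Delta V_r(0,n)$ for large $n$ to a comparison between $\sE\lambda(Z_n)$ and $(1-r/\mu_1)\min(\mu_1,\mu_2)$. For the \emph{if} direction, if $L:=\varlimsup\sE\lambda(Z_n)<\min(\mu_1,\mu_2)$, I pick $r>0$ small enough that $L<(1-r/\mu_1)\min(\mu_1,\mu_2)$; this produces a uniform gap $\Delta V_r(0,n)\le -c$ for all $n>N$, which simultaneously yields \fref{FosterDrift} with $S_0=\{(0,0),\ldots,(0,N)\}$ and \fref{FosterLim} by the preceding paragraph. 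For the \emph{only if} direction, if some $V_r\in\mathcal V$ satisfies Foster's criteria then $\Delta V_r(0,n)<0$ for all large $n$, and taking limsups in $\sE\lambda(Z_n)<\mu_2(1-r/\mu_1)\sP(Z_n>0)$ yields $\varlimsup\sE\lambda(Z_n)\le(1-r/\mu_1)\min(\mu_1,\mu_2)<\min(\mu_1,\mu_2)$ because $r>0$.

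Finally, when $\lambda(0)>0$ the rates \reff{eq:qDef} make every pair of states in $\Z_+^2$ mutually accessible, so the chain is irreducible and Tweedie's Theorem~\ref{the:foster} applied to a Lyapunov function from $\mathcal V$ yields stability. The main obstacle I foresee is the asymptotic identity for $\mu_2\,\sP(Z_n>0)$: although elementary, it must be handled case by case because $\geom_n(\mu_1/\mu_2)$ degenerates differently according to whether $\mu_1/\mu_2$ is less than, equal to, or greater than one, and one must also ensure that the strict limsup inequality in \reff{eq:suffCond} genuinely converts into the \emph{uniform} negative bound on $\Delta V_r(0,n)$ required by \fref{FosterDrift}, rather than mere pointwise negativity.
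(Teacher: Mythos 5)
Your plan is correct and follows essentially the same route as the paper: $\Delta V_r\equiv -r$ off the boundary $\{x_1=0\}$ reduces Foster's criteria to the behaviour of $\Delta V_r(0,n)$, the limit $\mu_2\sP(Z_n>0)\to\min(\mu_1,\mu_2)$ is used to relate the boundary drifts to $\varlimsup\sE\lambda(Z_n)$, \fref{FosterLim} is deduced from the eventual increase of $v_r$, and Tweedie's theorem finishes. The only cosmetic difference is that the paper packages the boundary-drift equivalence as a standalone lemma and obtains a clean bound $\Delta V_r(0,n)\le -r$ via the exact identity $\sP(Z_n=n)+(\mu_2/\mu_1)\sP(Z_n>0)=1$, whereas you argue directly from the limit of $\mu_2\sP(Z_n>0)$ with an epsilon-management step; both are sound.
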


The proof of the theorem will utilize the following two lemmas.

\begin{lemma}
  \label{lem:suffCond}
  Condition~\reff{eq:suffCond} is equivalent to
  $\varlimsup \Delta V_r(0,n) < 0$ for some $r>0$.
\end{lemma}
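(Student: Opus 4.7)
The plan is to exploit the identity~\reff{eq:boundaryDrift2}. Set $N_n := \sE\lambda(Z_n) - \mu_2(1-r/\mu_1)\sP(Z_n>0)$, so that $\Delta V_r(0,n) = N_n/\sP(Z_n=n)$. Since $\sP(Z_n=n)>0$ for every $n$, the drift and the numerator always share the same sign, which reduces the problem to controlling $N_n$ together with the scale of $\sP(Z_n=n)$. A direct computation with the geometric weights $\geom_n(\mu_1/\mu_2)$ gives $\sP(Z_n>0)\to\min(\mu_1/\mu_2,\,1)$, so that
\[
  \mu_2\lim_{n\to\infty}\sP(Z_n>0) \;=\; \min(\mu_1,\mu_2),
\]
and $\sP(Z_n=n)\to\max(1-\mu_2/\mu_1,\,0)$. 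The structurally important consequence is that $\sP(Z_n=n)$ is either bounded below by a positive constant (when $\mu_1>\mu_2$) or tends to zero (when $\mu_1\le\mu_2$).

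For the forward direction, I assume~\reff{eq:suffCond} and use the displayed limit to obtain
\[
  \varlimsup_n N_n \;=\; \varlimsup_n \sE\lambda(Z_n) \;-\; (1-r/\mu_1)\min(\mu_1,\mu_2).
\]
Picking $r>0$ small enough that $(1-r/\mu_1)\min(\mu_1,\mu_2)$ still exceeds $\varlimsup_n \sE\lambda(Z_n)$, which is possible precisely because of~\reff{eq:suffCond}, makes this quantity strictly negative. Dividing by the positive sequence $\sP(Z_n=n)$ then yields $\varlimsup_n \Delta V_r(0,n)<0$: it is immediate if $\sP(Z_n=n)$ stays bounded below, and when $\sP(Z_n=n)\to 0$ the ratio even tends to $-\infty$.

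For the converse, assume $\varlimsup_n \Delta V_r(0,n)<0$ for some $r>0$. Then $N_n<0$ for all sufficiently large $n$, so $\varlimsup_n N_n\le 0$. Since $\sP(Z_n>0)$ is convergent, the correction term separates cleanly from the limsup and I obtain
\[
  \varlimsup_n \sE\lambda(Z_n) \;\le\; (1-r/\mu_1)\min(\mu_1,\mu_2) \;<\; \min(\mu_1,\mu_2),
\]
which is~\reff{eq:suffCond}.

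The subtle point I expect to encounter lies in the converse: the negativity of $\Delta V_r(0,n)$ yields only $\varlimsup_n N_n\le 0$ (not strict), because $\sP(Z_n=n)$ may vanish, so a strictly negative limit of the ratio cannot be lifted to a strictly negative limit of the numerator by direct comparison. The strict inequality required in~\reff{eq:suffCond} must therefore come from the built-in positive slack $r\min(\mu_1,\mu_2)/\mu_1>0$ already baked into $N_n$. This is precisely why the family $\mathcal{V}$ is parametrized by $r>0$ rather than a single borderline function, and once that observation is in hand the rest of the argument is a routine bookkeeping of limsups.
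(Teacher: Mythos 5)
Your proof is correct and follows essentially the same route as the paper: both start from \reff{eq:boundaryDrift2}, both exploit the limit $\mu_2\sP(Z_n>0)\to\min(\mu_1,\mu_2)$ to rewrite \reff{eq:suffCond}, and both rely on the strictly positive slack furnished by $r>0$ to recover the strict inequality in the converse direction. The only cosmetic difference is that the paper uses the exact algebraic identity $\sP(Z_n=n)+(\mu_2/\mu_1)\sP(Z_n>0)=1$ to make the denominator cancel cleanly (yielding $\Delta V_r(0,n)\le -r$ exactly), whereas you work with limsup arithmetic and the fact that $\sP(Z_n=n)\le 1$; your case split on whether $\sP(Z_n=n)$ tends to zero is not actually needed, since $N_n\le-\delta$ and $\sP(Z_n=n)\le 1$ already give $\Delta V_r(0,n)\le-\delta$.
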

\begin{proof}
  Let $Z_n\sim\geom_n(\mu_1/\mu_2)$ for $n\geq 0$.  Observe first that
  since $\lim\sP(Z_n>0) = \min(1,\mu_1/\mu_2)$,
  \begin{equation}
    \label{eq:minToProb}
    \varlimsup\sE\lambda(Z_n) - \min(\mu_1,\mu_2)
    = \varlimsup \, \{ \sE\lambda(Z_n)-\mu_2\sP(Z_n>0) \}.
  \end{equation}
  Assume now that~\reff{eq:suffCond} holds. Then we can choose an
  $r>0$ so that $\sE\lambda(Z_n)-\mu_2\sP(Z_n>0) \leq -r$
  for $n$ large enough. It follows that $\varlimsup\Delta V_r(0,n)<0$, since
  using~\reff{eq:boundaryDrift2} we see that eventually for large $n$,
  \[
  \Delta V_r(0,n) \leq \frac{-r + r\, \mu_2/\mu_1 \sP(Z_n>0)}{\sP(Z_n=n)} = -r.
  \]

  For the other direction, assume $\varlimsup\Delta V_r(0,n)<0$ for
  some $r>0$. Then there is an $s\in(0,r)$ so that for $n$ large
  enough, $\Delta V_r(0,n)\leq -s$, and
  applying~\reff{eq:boundaryDrift2},
  \[
  \frac{\sE\lambda(Z_n)-\mu_2(1-s/\mu_1) \, \sP(Z_n>0)}{\sP(Z_n=n)}
  \leq \Delta V_r(0,n)
  \leq -s.
  \]
  This shows that
  \[
  \sE\lambda(Z_n)-\mu_2\sP(Z_n>0) \leq
  -s(\sP(Z_n=n)+\mu_2/\mu_1\sP(Z_n>0)) = -s
  \]
  for all $n$ large enough, and in light of~\reff{eq:minToProb} it
  follows that $\varlimsup\sE\lambda(Z_n) < \min(\mu_1,\mu_2)$.
\end{proof}

\begin{lemma}
  \label{lem:infinity2D}
  Let $f$ be a function of the form $f(x)=u(x_1)+v(x_2)$ for some
  $u,v:\Z_+\to\R$.  Then $\, \lim_{x\to\infty} f(x) = \infty $ if and
  only if $\, \lim_{x_1\to\infty} u(x_1) = \infty$ and $\,
  \lim_{x_2\to\infty} v(x_2) = \infty$.
\end{lemma}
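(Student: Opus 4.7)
The plan is to argue directly from the definition recalled at the start of Section~2: $\lim_{x\to\infty} V(x) = \infty$ means precisely that every sub-level set $\{x : V(x) \leq M\}$ is finite. The two implications then split cleanly and should be handled separately.

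For the ``only if'' direction, I would fix an arbitrary $x_2^\ast \in \Z_+$ and observe that the slice $\{(x_1, x_2^\ast) : f(x_1, x_2^\ast) \leq M\}$ is contained in the (finite) sub-level set of $f$ at level $M$. Projecting to the first coordinate yields finiteness of $\{x_1 : u(x_1) \leq M - v(x_2^\ast)\}$, and since $M$ is arbitrary this is exactly $\lim_{x_1\to\infty} u(x_1) = \infty$. The argument for $v$ is identical with the roles of the coordinates swapped.

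For the ``if'' direction I would first use the one-dimensional hypothesis to extract a finite lower bound: the set $\{x_1 : u(x_1) \leq 0\}$ is finite, hence $u$ is bounded below by some finite constant $a$, and similarly $v$ is bounded below by some finite $b$. Given any $M \in \R$, the inequality $u(x_1) + v(x_2) \leq M$ then forces $u(x_1) \leq M - b$ and $v(x_2) \leq M - a$; each inequality determines a finite subset of $\Z_+$ by the one-dimensional hypothesis, so $\{x : f(x) \leq M\}$ sits inside a finite Cartesian product and is itself finite.

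No real obstacle is anticipated: the proof is essentially bookkeeping with sub-level sets of additively separable functions. The only mildly nontrivial observation is that divergence to infinity on $\Z_+$ automatically yields a finite lower bound, which is invoked exactly once in the ``if'' direction to decouple the two coordinates.
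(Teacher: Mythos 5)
Your proof is correct and follows essentially the same route as the paper: the ``if'' direction uses finite lower bounds on $u$ and $v$ to decouple the sub-level set into a product of finite sets, and the ``only if'' direction fixes one coordinate and projects (the paper phrases this as a contradiction with $x_2=0$; you argue directly with an arbitrary $x_2^\ast$, which is a cosmetic difference).
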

\begin{proof}
  Assume $\lim u(x_1) = \lim v(x_2) = \infty$, and fix an $M\in\R$.
  Since $u_0 = \inf u(x_1)$ and $v_0 = \inf v(x_2)$ are finite, we can
  choose $m_1$ and $m_2$ such that $u(x_1) > M - v_0$ for all $x_1>
  m_1$, and $v(x_2) > M - u_0$ for all $x_2> m_2$. Hence, $f(x)>M$ if
  either $x_1>m_1$ or $x_2>m_2$, so that the set $\{x: f(x)\leq M\}
  \subset [0,m_1]\times [0,m_2]$ is finite. Since $M$ was arbitrary,
  $\lim_{x\to\infty} f(x) = \infty$.

  Suppose next that $\lim_{x\to\infty} f(x) = \infty$. Then if
  $\varliminf u(x_1) < \infty$, there is a $c\in \R$ so that $S =
  \{x_1: u(x_1) \leq c\}$ is infinite.  This implies that $\{x: f(x)
  \leq c+v(0)\} \supset S\times \{0\}$ is infinite, contrary to the
  assumption $\lim_{x\to\infty} f(x) = \infty$.  Thus, $\varliminf
  u(x_1)=\infty$. Similarly, one proves that $\varliminf
  v(x_2)=\infty$.
\end{proof}

\begin{proof}[Proof of Theorem~\ref{the:suffCond}]
  Let $r>0$ and assume $V_r\in\mathcal{V}$ is a Lyapunov function for
  $q$.  Let $S_0$ be a finite set so that~\fref{FosterDrift} holds.
  Then $\{0\}\times (n_0,\infty) \subset S_0^c$ for some $n_0$, which
  implies
  \[
  \varlimsup \Delta V_r(0,n)
  \leq  \sup_{n>n_0} \Delta V_r(0,n)
  \leq \sup_{x\in S_0^c} \Delta V_r(x)
  < 0.
  \]
  By Lemma~\ref{lem:suffCond}, this implies~\reff{eq:suffCond}.

  For the other direction, assume that~\reff{eq:suffCond} holds.
  Applying Lemma~\ref{lem:suffCond}, we can pick an $r>0$ so that
  $\varlimsup\Delta V_r(0,n)<0$. Hence, there is an $n_0$ and an
  $\epsilon>0$ so that $\Delta V_r(0,n) \leq -\epsilon$ for all
  $n>n_0$.  Denoting $S_0 = \{0\}\times [0,n_0]$, it follows that
  \[
  \sup_{x\in S_0^c} \Delta V_r(x)
  = \max\{ \sup_{n>n_0} \Delta V_r(0,n), \sup_{x:x_1>0} \Delta V_r(x) \}
  \leq \max\{ -\epsilon, -r\} <0,
  \]
  since by the construction of $V_r$, $\Delta V_r(x)=-r$ for all $x$
  with $x_1>0$. Thus, $V_r$ satisfies~\fref{FosterDrift}.  Next,
  observe that using~\reff{eq:boundaryDrift},
  \[
  \lambda(n) - \mu_2 (v_r(n)-v_r(n-1)) = \Delta V_r(0,n) \leq -\epsilon
  \quad\text{for} \ n>n_0.
  \]
  This shows that $v_r(n)-v_r(n-1) \geq\epsilon/\mu_2$ eventually for
  large $n$, so that $\lim_{n\to\infty}v_r(n) = \infty$. By
  Lemma~\ref{lem:infinity2D}, we conclude that $V_r$
  satisfies~\fref{FosterLim}. Further,~\fref{FosterFin} holds
  trivially since the set $\{x: q(x,y)>0\}$ is finite for all $x$.
  Thus, $V_r$ is a Lyapunov function for $q$.  Finally, note that $X$
  is irreducible when $\lambda(0)>0$. Hence, application of
  Theorem~\ref{the:foster} now completes the proof.
\end{proof}

\section{Necessary conditions for stability}

Assume $\lambda(0)>0$ so that the system $S(\lambda,\mu_1,\mu_2)$ is
irreducible.  In the previous section we saw that
\[
\varlimsup \sE \lambda(Z_n) < \min(\mu_1,\mu_2), \quad Z_n\sim\geom_n(\mu_1/\mu_2),
\]
is sufficient for the stability of $S(\lambda,\mu_1,\mu_2)$.  This
section is devoted to studying whether the above condition is also
necessary for stability.

\subsection{Small perturbations of Markov processes}
This section studies how ergodicity is preserved under small
perturbations of generators of Markov processes. If $q(x,y)$ and
$q'(x,y)$ are generators of Markov processes on a countable state
space $S$, denote
\[
D(q,q') = \{ x: q(x,y)\neq q'(x,y) \ \text{for some} \ y \}
\]
and
\[
\overline{D}(q,q')
= D(q,q') \cup \{ y: q(x,y)>0 \ \text{or} \ q'(x,y)>0 \ \text{for some} \ x\in D(q,q')\}.
\]
Further, for $F\subset S$ let
\[
T_F = \inf\{t>0:X(t-)\neq X(t), \ X(t)\in F\},
\]
with the convention $\inf\emptyset = \infty$, and $T_x = T_{\{x\}}$
for $x\in S$.

\begin{lemma}
  \label{lem:ergodicEquiv}
  Let $X$ and $X'$ be irreducible Markov processes on a countable
  state space $S$ generated by $q(x,y)$ and $q'(x,y)$, respectively,
  with $q(x)$, $q'(x)<\infty$ for all $x$.  Assume that
  $\overline{D}(q,q')$ is finite.  Then $X$ is ergodic if and only if
  $X'$ is ergodic.
\end{lemma}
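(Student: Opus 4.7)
The statement is symmetric in $q$ and $q'$, so it suffices to show that ergodicity of $X$ implies ergodicity of $X'$; the other direction then follows by interchanging their roles.

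My approach is to transport a Lyapunov function from $q$ to $q'$. Tweedie's theorem in \cite{tweedie1975} actually establishes both directions of Foster's criterion in continuous time: ergodicity of an irreducible Markov process on a countable state space is equivalent to the existence of a Lyapunov function for its generator. Granting this converse, if $X$ is ergodic I fix such a function $V:S\to\R$ for $q$ together with a finite set $S_0\subset S$ witnessing \fref{FosterDrift}.

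I would then argue that the \emph{same} $V$ serves as a Lyapunov function for $q'$, with exceptional finite set replaced by $S_0':=S_0\cup D(q,q')$, which is finite by hypothesis. Condition \fref{FosterLim} is a property of $V$ alone and carries over unchanged. Since $q(x,\cdot)=q'(x,\cdot)$ whenever $x\notin D(q,q')$, the drifts of $V$ under $q$ and under $q'$ agree on $S\setminus D(q,q')$; hence
\[
\sup_{x\in S\setminus S_0'} \Delta' V(x) \;=\; \sup_{x\in S\setminus S_0'} \Delta V(x) \;\le\; \sup_{x\in S\setminus S_0} \Delta V(x) \;<\; 0,
\]
which gives \fref{FosterDrift} for $q'$. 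For \fref{FosterFin}, at $x\notin D(q,q')$ the $q$- and $q'$-sums coincide, and at $x\in D(q,q')$ any $y$ with $q'(x,y)>0$ lies in $\overline{D}(q,q')$ by the very definition of $\overline{D}$; since $\overline{D}(q,q')$ is finite, the sum reduces to a finite one and converges trivially. Thus $V$ is a Lyapunov function for $q'$ and Theorem~\ref{the:foster} delivers ergodicity of $X'$.

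\textbf{Main obstacle.} The real content of the lemma sits in the converse of Foster's theorem, that ergodicity of $X$ forces the existence of \emph{some} Lyapunov function for $q$; this is not what Theorem~\ref{the:foster} as stated in the excerpt provides, but it is the additional ingredient supplied by Tweedie's result in \cite{tweedie1975}. An alternative that sidesteps this converse is a direct return-time argument: since $q=q'$ off the finite set $F:=\overline{D}(q,q')$, every excursion of $X'$ away from $F$ has the same law as the corresponding excursion of $X$ and hence has finite expected length when $X$ is ergodic; combining this with uniformly bounded sojourns inside the finite set $F$ and positive recurrence of the embedded chain of visits of $X'$ to $F$ gives finite expected return time to any fixed state of $F$, hence ergodicity of $X'$.
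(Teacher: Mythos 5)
Your main argument and the paper's proof take genuinely different routes, and the "alternative" you relegate to the obstacle paragraph is, in substance, the paper's actual proof.

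The Lyapunov-function route has a real gap at exactly the place you flag. The converse of Foster's criterion you need — that ergodicity produces a function satisfying \emph{all} of \fref{FosterFin}--\fref{FosterDrift}, in particular the coercivity condition \fref{FosterLim} — is not supplied by Theorem~\ref{the:foster}, and it is not available for free. The natural candidate $V(x)=\sE_x T_{S_0}$ does satisfy a unit negative drift outside the finite set $S_0$, but it need not diverge: for instance, if every state $x$ carries a jump directly into a fixed state at rate bounded below, then $\sE_x T_{S_0}$ is uniformly bounded and \fref{FosterLim} fails. To run your argument one would either need a citation that explicitly constructs a \emph{norm-like} Lyapunov function from ergodicity in the continuous-time countable-state setting, or an additional step modifying $V$ to restore \fref{FosterLim} without spoiling the negative drift; "granting this converse" absorbs the entire difficulty. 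Your transfer step itself is correct and clean: the same $V$, the enlarged finite exceptional set $S_0\cup D(q,q')$, drifts agreeing off $D(q,q')$, and \fref{FosterFin} on $D(q,q')$ follows because $q'(x,y)>0$ forces $y\in\overline{D}(q,q')$, which is finite.

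The direct return-time argument you sketch at the end is the paper's proof, modulo one refinement: the paper controls the mean hitting time of $D=D(q,q')$ rather than of $\overline{D}$. For $x\in D$, after one jump $X$ lands in $\overline{D}$ almost surely; if it lands in $\overline{D}\setminus D$, the subsequent path up to the first re-entry into $D$ has the same law under $q$ and under $q'$, so $\sE_x T_D \le \sE_x\tau + \sup_{y\in\overline{D}}\sE_y T'_D$, which is finite because $X'$ is ergodic and $\overline{D}$ is finite. Finiteness of $\sE_x T_D$ for $x\in D$, together with irreducibility, gives positive recurrence by Meyn's hitting-time criterion. Fleshing your last paragraph out to this level of precision — in particular, replacing "uniformly bounded sojourns inside $F$" (which is not what is used) with the strong-Markov decomposition at the first jump — yields a complete proof that entirely avoids the converse of Foster's theorem, and that is the route you should take.
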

\begin{proof}
  By symmetry, it is sufficient to show that the ergodicity of $X'$
  implies that of $X$. So, assume $X'$ is ergodic, and let $x$ be a
  state in $D=D(q,q')$. Denote the first jump time of $X$ by $\tau =
  \inf\{t>0: X(t-)\neq X(t)\}$. By irreducibility, $\sE_x\tau <
  \infty$, so by the strong Markov property,
  \[
  \sE_x T_D  = \sE_x\tau + \sE_x ( \sE_{X(\tau)} T_D ; X(\tau)\notin D).
  \]
  Since $q(x,y)$ and $q'(x,y)$ coincide outside $D$, and
  $\sP_x(X(\tau)\in\overline{D})=1$,
  \[
  \sE_x ( \sE_{X(\tau)} T_D; \, X(\tau)\notin D)
  = \sE_x ( \sE_{X(\tau)} T'_D; \ X(\tau)\in\overline{D}\setminus D)
  \leq \sup_{y\in \overline{D}} \sE_y T'_D.
  \]
  Since $X'$ is ergodic and $\overline{D}$ is finite, so is the
  right-hand side in the above inequality, and we conclude $\sE_x T_D
  < \infty$. Because $X$ is irreducible, this property implies that
  $X$ is positive recurrent (Meyn~\cite{meyn1993}, Theorem 4.3:(ii)
  and Theorem 4.4).
\end{proof}

\subsection{Bottleneck at node 1}
Assume $\mu_1<\mu_2$. Intuition suggests that in this case the
stability of the system depends on whether or not the buffer content
at node~1 grows to infinity. Observe that during the periods of time
where node~1 remains busy, the input to node~2 is a Poisson process
with rate $\mu_1$. The approach here is to compare the original
process to a saturated system where node~2 gets input at rate $\mu_1$
also during the time periods where node~1 is empty, and show that the
stability regions for the two systems are close to each other. With
this goal in mind, let us introduce another model family denoted by
$S^N(\lambda,\mu_1,\mu_2)$.  Fix a nonnegative integer $N$, and define
for $x\neq y$,
\[
q^N(x,y) = q(x,y) \, + \,  \mu_1 1(x_1=0, \, x_2<N, \, y=x+e_2).
\]
It is clear that when $\lambda(0)>0$, the transition rates $q^N(x,y)$
define using the minimal construction an irreducible Markov process
$X^N$ on $\Z_+^2\cup\{\kappa\}$. By Lemma~\ref{lem:ergodicEquiv} we
know that the stability of $S^N(\lambda,\mu_1,\mu_2)$ is equivalent to
that of $S(\lambda,\mu_1,\mu_2)$. Further, by letting $N$ approach
infinity, $S^N(\lambda,\mu_1,\mu_2)$ will resemble a network where
node~2 receives stationary input at rate $\mu_1$.

\begin{lemma}
  \label{lem:SStar}
  Assume that $S^N(\lambda,\mu_1,\mu_2)$ is stable.  Then the
  stationary distribution of $X^N$ satisfies
  \begin{equation}
    \label{eq:SStarStability}
    \sE\lambda(X^N_2) = \mu_2 \sP(X^N_2>0) - \mu_1 \sP(X^N_1=0,X^N_2<N),
  \end{equation}
  \begin{multline}
    \label{eq:SStarBound}
    \sP(X^N_2=n) =  (\mu_1/\mu_2)^n \sP(X^N_2=0) \\
    - 1(n>N) \sum_{j=N}^{n-1} (\mu_1/\mu_2)^{n-j} \sP(X_1^N=0,X_2^N=j),
  \end{multline}
  and for all real-valued $f$ on $\Z_+$,
  \begin{equation}
    \label{eq:SStarCond}
    \sE (f(X^N_2) ; X^N_2 \leq N) = \sE f(Z_N) \, \sP(X^N_2\leq N),  \quad Z_N\sim\geom_N(\mu_1/\mu_2).
  \end{equation}
\end{lemma}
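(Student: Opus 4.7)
The plan is to read off all three identities from stationary flow balances of $X^N$. For brevity, write $\rho = \mu_1/\mu_2$, $p_n = \sP(X^N_2=n)$, and $b_n = \sP(X^N_1=0, X^N_2=n)$; the stationary distribution exists by assumption, so flow in equals flow out across every partition of $\Z_+^2$.

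For \reff{eq:SStarStability} I would combine two balances. Balancing arrivals into and service completions out of node~1 gives $\sE\lambda(X^N_2) = \mu_1\sP(X^N_1>0)$, since the only transition raising $X_1$ is the external arrival at rate $\lambda(X^N_2)$ and the only one lowering it is node~1 service at rate $\mu_1$ on $\{X^N_1\geq 1\}$. Balancing jumps of the $X_2$-coordinate gives
\[
\mu_1\sP(X^N_1>0) + \mu_1\sP(X^N_1=0, X^N_2<N) = \mu_2\sP(X^N_2>0),
\]
since up-jumps of $X_2$ come from node~1 service and from the boost term built into $q^N$, while down-jumps come from node~2 service. Eliminating $\mu_1\sP(X^N_1>0)$ between the two identities yields the claim.

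For \reff{eq:SStarBound} I would, for each $n\geq 1$, apply the cut balance between $\{x_2\leq n-1\}$ and $\{x_2\geq n\}$. Up-crossings occur at rate $\mu_1 p_{n-1}$ when $n\leq N$ (both node~1 service and the boost contribute, covering all states at level $n-1$) and at rate $\mu_1(p_{n-1}-b_{n-1})$ when $n>N$ (the boost no longer fires because $n-1\geq N$). Matching these to the down-crossing rate $\mu_2 p_n$ gives $p_n = \rho p_{n-1}$ for $1\leq n\leq N$, hence $p_n = \rho^n p_0$ up to level $N$, and $p_n = \rho p_{n-1} - \rho b_{n-1}$ for $n>N$, which I would unroll inductively from the base $p_N=\rho^N p_0$ to produce the telescoping sum in \reff{eq:SStarBound}.

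Finally, \reff{eq:SStarCond} is essentially a restatement of what \reff{eq:SStarBound} says on $\{0,1,\ldots,N\}$: there $p_n$ is proportional to $\rho^n$, so conditional on $\{X^N_2\leq N\}$ the law of $X^N_2$ coincides with that of $Z_N\sim\geom_N(\mu_1/\mu_2)$. Rewriting the conditioning as multiplication by $\sP(X^N_2\leq N)$ then produces the stated identity. The only delicate point in the whole argument is the book-keeping at level $n=N$ in the cut balance, where the boost ceases to contribute; once that boundary case is handled correctly, the rest is a routine induction.
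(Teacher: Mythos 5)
Your proof is correct and follows essentially the same route as the paper's: the same two coordinate-wise balance equations give \reff{eq:SStarStability}, and the same level-$n$ cut balance yields the recursion $p_{n+1}=\rho\bigl[p_n-1(n\geq N)b_n\bigr]$ from which \reff{eq:SStarBound} and \reff{eq:SStarCond} follow. The paper merely states these balances as ``straightforward to verify,'' whereas you have spelled out the book-keeping of which transitions cross each cut, including the boundary at level $N$ where the boost term ceases to contribute.
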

\begin{proof}
  Starting from the balance equations for $X^N$, it is not hard to
  check that $\sE \lambda(X_2) = \mu_1\sP(X^N_1>0)$, and
  \[
  \mu_1\sP(X^N_1>0) + \mu_1 \sP(X^N_1=0,X^N_2<N) = \mu_2\sP(X^N_2>0),
  \]
  showing that~\reff{eq:SStarStability} is true. Further, it is
  straightforward to verify that for all $n$,
  \[
  \sP(X^N_2=n+1) = \mu_1/\mu_2 \, \left[\sP(X^N_2=n) - 1(n\geq N)\sP(X^N_1=0,X^N_2=n)\right],
  \]
  from which~\reff{eq:SStarBound} and~\reff{eq:SStarCond} follow.
\end{proof}

\begin{theorem}
  \label{the:necCondSmallMu1}
  Assume $\mu_1<\mu_2$, and let $Z\sim \geom(\mu_1/\mu_2)$. Then
  \begin{align*}
    \sE\lambda(Z) &< \mu_1
    \implies S(\lambda,\mu_1,\mu_2) \ \text{is stable},\\
    \sE\lambda(Z) &> \mu_1
    \implies S(\lambda,\mu_1,\mu_2) \ \text{is unstable}.
  \end{align*}
\end{theorem}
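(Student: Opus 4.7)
The plan is to handle the two implications separately. For stability I would invoke Theorem~\ref{the:suffCond} after relating $\sE\lambda(Z_n)$ to $\sE\lambda(Z)$; for instability I would work inside the saturated system $S^N(\lambda,\mu_1,\mu_2)$, use Lemma~\ref{lem:ergodicEquiv} to transfer the stability hypothesis, and then derive a contradiction from the identities of Lemma~\ref{lem:SStar} by sending $N\to\infty$.

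For the stability implication, I first observe that $Z_n$ is just $Z\sim\geom(\mu_1/\mu_2)$ conditioned on $Z\leq n$, so $\sE\lambda(Z_n)=\sE[\lambda(Z);Z\leq n]/\sP(Z\leq n)$. Since $\lambda\geq 0$, monotone convergence together with $\sP(Z\leq n)\uparrow 1$ gives $\sE\lambda(Z_n)\to\sE\lambda(Z)$. Because $\mu_1<\mu_2$ forces $\min(\mu_1,\mu_2)=\mu_1$, the hypothesis $\sE\lambda(Z)<\mu_1$ yields $\varlimsup\sE\lambda(Z_n)<\min(\mu_1,\mu_2)$, and Theorem~\ref{the:suffCond} delivers stability.

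For the instability direction, I suppose for contradiction that $S(\lambda,\mu_1,\mu_2)$ is stable. Since $\overline{D}(q,q^N)$ is finite, Lemma~\ref{lem:ergodicEquiv} makes $S^N$ stable for every $N\geq 0$. The strategy is to squeeze $\sP(X_2^N>N)$ between two conflicting bounds as $N\to\infty$. The upper bound comes directly from~\reff{eq:SStarBound}: positivity of the subtracted sum yields $\sP(X_2^N=n)\leq(\mu_1/\mu_2)^n$ for $n>N$, so $\sP(X_2^N>N)\leq(\mu_1/\mu_2)^{N+1}/(1-\mu_1/\mu_2)\to 0$ (here $\mu_1<\mu_2$ is essential). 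For the matching lower bound I would apply~\reff{eq:SStarCond} to $f=\lambda$ and to $f=1(\cdot>0)$, obtaining $\sE\lambda(X_2^N)\geq\sE\lambda(Z_N)\sP(X_2^N\leq N)$ and $\sP(X_2^N>0)=\sP(Z_N>0)\sP(X_2^N\leq N)+\sP(X_2^N>N)$; substituting these into~\reff{eq:SStarStability} and dropping the nonnegative term $\mu_1\sP(X_1^N=0,X_2^N<N)$ produces
\[
\bigl[\sE\lambda(Z_N)-\mu_2\sP(Z_N>0)\bigr]\sP(X_2^N\leq N)\leq\mu_2\sP(X_2^N>N).
\]
Passing to the limit, $\sE\lambda(Z_N)\to\sE\lambda(Z)$ as in the stability step, and $\mu_2\sP(Z_N>0)\to\mu_1$, so the bracket tends to $\sE\lambda(Z)-\mu_1>0$; combined with $\sP(X_2^N>N)\to 0$ this forces $\sP(X_2^N\leq N)\to 0$, contradicting $\sP(X_2^N\leq N)=1-\sP(X_2^N>N)\to 1$. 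The delicate step is engineering the lower bound so that the clean constants $\sE\lambda(Z)$ and $\mu_1$ emerge as the asymptotic coefficients of the dominant mass $\sP(X_2^N\leq N)$, while the geometric tail handles the surplus on the right.
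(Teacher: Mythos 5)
Your proof is correct and follows essentially the same route as the paper's: stability via Theorem~\ref{the:suffCond} after observing $\sE\lambda(Z_n)\to\sE\lambda(Z)$, and instability by passing to the ergodic saturated systems $S^N$ via Lemma~\ref{lem:ergodicEquiv} and letting $N\to\infty$ in the identities of Lemma~\ref{lem:SStar}. The only (cosmetic) difference is how you obtain $\lim\sP(X_2^N>0)=\mu_1/\mu_2$: you decompose $\sP(X_2^N>0)=\sP(Z_N>0)\sP(X_2^N\leq N)+\sP(X_2^N>N)$ via~\reff{eq:SStarCond} with $f=1(\cdot>0)$, whereas the paper extracts $\sP(X_2^N=0)=\sP(X_2^N\leq N)/\sum_{n=0}^N(\mu_1/\mu_2)^n$ directly from~\reff{eq:SStarBound}; both yield the same limit, and the paper then reads off $\sE\lambda(Z)\leq\mu_1$ in the limit of~\reff{eq:ineqSmallMu1} rather than rearranging into the contradiction $\sP(X_2^N\leq N)\to 0$ as you do.
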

\begin{proof}
  Let $Z_N\sim \geom_N(\mu_1/\mu_2)$. Because $\mu_1<\mu_2$, it
  follows that $\sE\lambda(Z_N) \to \sE\lambda(Z)$ as $N \to \infty$. The first
  statement now follows from Theorem~\ref{the:suffCond}.  To prove the
  second claim, assume that $S(\lambda,\mu_1,\mu_2)$ is stable. Then
  by Lemma~\ref{lem:ergodicEquiv}, so is $S^N(\lambda,\mu_1,\mu_2)$
  for each $N$.  Applying~\reff{eq:SStarStability}
  and~\reff{eq:SStarCond} we see that
  \begin{equation}
    \label{eq:ineqSmallMu1}
    \sE \lambda(Z_N) \sP(X^N_2\leq N)
    = \sE (\lambda(X^N_2); X^N_2\leq N)
    \leq \mu_2 \sP(X^N_2>0).
  \end{equation}
  Next, \reff{eq:SStarBound} implies
  \[
  \sP(X^N_2>N) \leq \sum_{n>N} (\mu_1/\mu_2)^n \quad \text{for all} \ N,
  \]
  so that $\lim\sP(X^N_2\leq N) = 1$. This observation combined
  with~\reff{eq:SStarBound} implies
  \[
  \sP(X^N_2=0) = \frac{\sP(X^N_2\leq N)}{\sum_{n=0}^N (\mu_1/\mu_2)^n}
  \longrightarrow 1-\mu_1/\mu_2,
  \]
  as $N\to\infty$. Hence, $\lim\sP(X^N_2>0)=\mu_1/\mu_2$.  Letting $N
  \to \infty$ on both sides of~\reff{eq:ineqSmallMu1} now shows that
  $\sE \lambda(Z) \leq \mu_1$.
\end{proof}

\subsection{Bottleneck at node 2}
To study necessary stability conditions for the system when $\mu_1\geq
\mu_2$, the following asymptotical property of truncated geometric
random variables will be useful.
\begin{lemma}
  \label{lem:geomDistDiverging}
  Let $Z_n\sim\geom_n(z)$ with $z\geq 1$. Then for all nonnegative
  functions $f$ on $\Z_+$,
  \[
  \varliminf f(n)\leq \varliminf \sE f(Z_n) \leq \varlimsup\sE f(Z_n) \leq \varlimsup f(n).
  \]
\end{lemma}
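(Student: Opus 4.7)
The plan is to establish that, for $z\geq 1$, the truncated geometric $\geom_n(z)$ concentrates on its upper tail, i.e.\ $\sP(Z_n\geq N)\to 1$ as $n\to\infty$ for every fixed $N$. Once this is in hand, both sandwich inequalities follow from the standard trick of splitting $\sE f(Z_n)$ into contributions from $\{Z_n<N\}$ and $\{Z_n\geq N\}$, with $N$ chosen so that $f$ is controlled above or below by its limsup or liminf on $[N,\infty)$.

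First I would prove the concentration. Since $z\geq 1$ implies $z^j\geq 1$ for every $j\geq 0$, the normalizing sum satisfies $\sum_{j=0}^n z^j\geq n+1$, while trivially $\sum_{j=0}^{N-1}z^j\leq N z^{N-1}$. Hence
\[
\sP(Z_n\leq N-1)\;\leq\;\frac{N z^{N-1}}{n+1}\;\longrightarrow\;0\qquad\text{as }n\to\infty,
\]
so the $z=1$ (uniform) and $z>1$ regimes are handled simultaneously by a single estimate.

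For the right inequality, we may assume $L:=\varlimsup f(n)<\infty$, otherwise the bound is vacuous. Given $\epsilon>0$, pick $N$ with $f(k)\leq L+\epsilon$ for all $k\geq N$, and split
\[
\sE f(Z_n)\;=\;\sE[f(Z_n);\,Z_n<N]\;+\;\sE[f(Z_n);\,Z_n\geq N].
\]
The first term is at most $(\max_{k<N}f(k))\,\sP(Z_n<N)\to 0$ by the concentration bound, and the second is at most $(L+\epsilon)\sP(Z_n\geq N)\leq L+\epsilon$. Taking $\varlimsup$ and then $\epsilon\downarrow 0$ yields $\varlimsup\sE f(Z_n)\leq L$. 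For the left inequality, set $\ell:=\varliminf f(n)$ and, for each $\epsilon>0$, choose $N$ with $f(k)\geq \ell-\epsilon$ for $k\geq N$; using nonnegativity of $f$ to discard the $\{Z_n<N\}$ piece gives
\[
\sE f(Z_n)\;\geq\;(\ell-\epsilon)\,\sP(Z_n\geq N),
\]
so $\varliminf\sE f(Z_n)\geq\ell-\epsilon$, which finishes the proof after letting $\epsilon\downarrow 0$; the middle inequality is automatic.

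There is no real obstacle here. The only point requiring mild care is treating $z=1$ and $z>1$ simultaneously, which the bound $\sum_{j=0}^n z^j\geq n+1$ achieves cleanly, and checking that the left inequality remains valid when $\ell-\epsilon<0$, for which the nonnegativity of $f$ makes the bound only stronger.
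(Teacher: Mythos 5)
Your proof is correct and takes essentially the same approach as the paper: both exploit that for $z\geq1$ the normalizing sum $\sum_{j=0}^n z^j\to\infty$, so the mass of $Z_n$ escapes any fixed finite set, and the expectation is governed by the tail values of $f$. The only cosmetic difference is that the paper proves the upper inequality and obtains the lower one by applying it to $-f$, whereas you prove both directions directly (using nonnegativity of $f$ to discard the small-$Z_n$ piece); both work equally well.
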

\begin{proof}
  Without loss of generality, assume $\varlimsup f(n)<\infty$.  Choose
  a number $r$ so that $\varlimsup f(n) < r$. Then there is an $n_0$
  so that $f(n)\leq r$ for all $n>n_0$, and thus
  \[
  \sE f(Z_n) \leq r + \frac{\sum_{j=0}^{n_0} (f(n)-r) z^j}{\sum_{j=0}^n z^j} \quad \text{for} \ n>n_0.
  \]
  This implies that $\varlimsup\sE f(Z_n)\leq r$, so by letting $r
  \downarrow \varlimsup f(n)$, it follows that $\varlimsup\sE f(Z_n)
  \leq \varlimsup f(n)$. The proof is completed by applying this
  inequality to $-f$.
\end{proof}

\begin{theorem}
  \label{the:necCondLargeMu1}
  Assume $\mu_1\geq\mu_2$, and let $Z_n\sim \geom_n(\mu_1/\mu_2)$. Then
  \begin{align*}
    \varlimsup \sE\lambda(Z_n) &< \mu_2
    \implies S(\lambda,\mu_1,\mu_2) \ \text{is stable},\\
    \varliminf \lambda(n) &> \mu_2
    \implies S(\lambda,\mu_1,\mu_2) \ \text{is unstable}.
  \end{align*}
  Especially, if $\lim \lambda(n)$ exists, then
  \begin{align*}
    \lim\lambda(n) &< \mu_2
    \implies S(\lambda,\mu_1,\mu_2) \ \text{is stable},\\
    \lim \lambda(n) &> \mu_2
    \implies S(\lambda,\mu_1,\mu_2) \ \text{is unstable}.
  \end{align*}
\end{theorem}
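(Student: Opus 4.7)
The first implication is immediate from Theorem~\ref{the:suffCond}: since $\mu_1 \geq \mu_2$, one has $\min(\mu_1, \mu_2) = \mu_2$, so the hypothesis $\varlimsup \sE\lambda(Z_n) < \mu_2$ is precisely~\reff{eq:suffCond}, while the standing assumption $\lambda(0) > 0$ of Section~3 supplies the required irreducibility. The ``Especially'' clause is then a short corollary: if $\lim \lambda(n) = L$ exists, Lemma~\ref{lem:geomDistDiverging} yields $\lim \sE \lambda(Z_n) = L$, and the cases $L < \mu_2$ and $L > \mu_2$ reduce at once to the two main implications.

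The real content is the instability claim, which I plan to prove by contradiction, following the template of Theorem~\ref{the:necCondSmallMu1}. Suppose $\varliminf \lambda(n) > \mu_2$ and that $S(\lambda, \mu_1, \mu_2)$ is nonetheless stable. Lemma~\ref{lem:ergodicEquiv} then transfers stability to the saturated system $S^N(\lambda, \mu_1, \mu_2)$ for every $N$, making Lemma~\ref{lem:SStar} available, and the balance identity~\reff{eq:SStarStability} yields the $N$-independent bound $\sE\lambda(X^N_2) \leq \mu_2$.

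The main obstacle is converting this inequality together with the asymptotic hypothesis into an actual contradiction. My plan is to pick $\epsilon > 0$ and $n_0$ with $\lambda(n) \geq \mu_2 + \epsilon$ for $n \geq n_0$, and combine them to deduce a uniform-in-$N$ lower bound $\sP(X^N_2 < n_0) \geq \epsilon/(\mu_2 + \epsilon)$. Here the hypothesis $\mu_1 \geq \mu_2$ should enter decisively through the structural identity~\reff{eq:SStarBound}, which for $n \leq N$ collapses to $\sP(X^N_2 = n) = (\mu_1/\mu_2)^n \sP(X^N_2 = 0)$. Summing this over $n < n_0$ bounds $\sP(X^N_2 = 0)$ below uniformly in $N$, while summing over $n \leq N$ and using $\sum_{n} \sP(X^N_2 = n) \leq 1$ bounds it above by $1/\sum_{n=0}^N (\mu_1/\mu_2)^n$; since $\mu_1 \geq \mu_2$ this denominator diverges as $N \to \infty$, forcing $\sP(X^N_2 = 0) \to 0$ and yielding the contradiction.
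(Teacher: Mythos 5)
Your argument is correct, and it follows the paper's overall strategy: reduce to the saturated family $S^N$ via Lemma~\ref{lem:ergodicEquiv}, invoke Lemma~\ref{lem:SStar}, and exploit the divergence of $\sum_{n}(\mu_1/\mu_2)^n$ when $\mu_1\geq\mu_2$. The final step of your contradiction differs slightly in bookkeeping. The paper uses~\reff{eq:SStarCond} together with Lemma~\ref{lem:geomDistDiverging} to deduce $\varliminf\sE\lambda(X^N_2)\geq\varliminf\lambda(n)$, then compares with $\sE\lambda(X^N_2)\leq\mu_2\sP(X^N_2>0)$ and $\sP(X^N_2>0)\to1$. You instead work directly with $\sE\lambda(X^N_2)\leq\mu_2$ and the geometric structure~\reff{eq:SStarBound} on $\{n\leq N\}$, obtaining a uniform-in-$N$ lower bound on $\sP(X^N_2=0)$ that clashes with the forced decay $\sP(X^N_2=0)\leq\bigl(\sum_{n=0}^N(\mu_1/\mu_2)^n\bigr)^{-1}\to0$. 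This is marginally more elementary since it bypasses~\reff{eq:SStarCond} and Lemma~\ref{lem:geomDistDiverging} in the instability half (you still need the latter for the ``especially'' clause), while the paper's version more transparently isolates the chain of inequalities $\varliminf\lambda(n)\leq\varliminf\sE\lambda(X^N_2)\leq\mu_2$. Both routes are valid and of comparable length.
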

\begin{proof}
  The first statement follows from Theorem~\ref{the:suffCond}. To
  prove the second implication, assume $S(\lambda,\mu_1,\mu_2)$ is
  stable. Then by Lemma~\ref{lem:ergodicEquiv}, so is
  $S^N(\lambda,\mu_1,\mu_2)$ for each $N$. Choose an $r\in\R$ so that
  $r<\varliminf\lambda(n)$.  It follows by
  Lemma~\ref{lem:geomDistDiverging} that $\varliminf\lambda(n)\leq
  \varliminf\sE\lambda(Z_n)$. Thus, $\lambda(N)\geq r$ and
  $\sE\lambda(Z_N)\geq r$ for all $N$ large enough. Thus, for all such $N$,
  \begin{align*}
    \sE\lambda(X^N_2)
    &= \sE (\lambda(X^N_2) ; X^N_2 > N) + \sE\lambda(Z_N) \sP(X^N_2\leq N) \\
    &\geq r \sP(X^N_2>N) + r \sP(X^N_2\leq N) = r,
  \end{align*}
  so $\varliminf\sE\lambda(X^N_2)\geq r$. Letting $r$ approach
  $\varliminf\lambda(n)$ we see that $\varliminf \sE \lambda(X^N_2)
  \geq \varliminf \lambda(n)$.  Next, $\lim\sP(X^N_2>0) = 1$, because
  $\sP(X^N_2=0) \leq (\sum_{j=0}^N (\mu_1/\mu_2)^j)^{-1}$
  by~\reff{eq:SStarBound}.  Moreover,
  equality~\reff{eq:SStarStability} shows that
  $\sE\lambda(X^N_2)\leq\mu_2\sP(X^N_2>0)$ for all $N$, so that
  \[
  \varliminf\lambda(n) \leq \varliminf\sE\lambda(X^N_2) \leq
  \varliminf \mu_2\sP(X^N_2>0) = \mu_2,
  \]
  which proves the second claim.  In the special case where
  $\lambda(n)$ has a limit when $n$ tends to infinity,
  Lemma~\ref{lem:geomDistDiverging} shows that
  \[
  \varliminf \lambda(n) = \lim \lambda(n) = \varlimsup \sE \lambda(Z_n),
  \]
  so the last two implications of the theorem now follow from the
  first two.
\end{proof}

There may exist a substantial gap between the necessary and sufficient
stability conditions of Theorem~\ref{the:necCondLargeMu1} if
$\lambda(n)$ is diverging.  To gain some insight why characterizing
the stability of the system is difficult for such $\lambda$, let us
consider the behavior of $S(\lambda,\mu_1,\mu_2)$ as $\mu_1$ tends to
infinity.  Intuition suggests that in this case the system should
resemble the single server queue with service rate $\mu_2$ and
state-dependent input rate $\lambda(n)$, for which it is known
(Asmussen~\cite{asmussen2003}, Corollary 2.5) that stability is
equivalent to
\begin{equation}
  \label{eq:singleServer}
  \sum_{n=0}^\infty \frac{\lambda(0)\cdots \lambda(n)}{\mu_2^{n+1}} < \infty.
\end{equation}
Consider for example the input rates $\lambda(n)=a$ for $n$ even, and
$\lambda(n)=b$ for $n$ odd, where $0<a<b$. Then~\reff{eq:singleServer}
reduces to $\sqrt{ab} < \mu_2$, while with
$Z_n\sim\geom_n(\mu_1/\mu_2)$,
\begin{equation}
  \label{eq:geomBounds}
  \varliminf \lambda(n)
  = a
  < \frac{\mu_1 b + \mu_2 a}{\mu_1+\mu_2}
  = \varlimsup \sE \lambda(Z_n).
\end{equation}
Hence, the gap between the necessary and
sufficient stability conditions in Theorem~\ref{the:necCondLargeMu1}
grows according to
\[
\left[ a,\frac{\mu_1 b + \mu_2 a}{\mu_1+\mu_2} \right]
\longrightarrow
\left[ a, b \right], \quad \mu_1\to\infty.
\]
However, condition~\reff{eq:singleServer} may not in general be the
correct asymptotical stability characterization of
$S(\lambda,\mu_1,\mu_2)$ as $\mu_1\to\infty$, due to a fundamental
difference between the single-server queue and the tandem network.
Namely, if $\lambda(n) = 0$ for some $n$, then the single-server queue
is stable because the queue size cannot exceed $n$.  Obviously, this
property is not true for $S(\lambda,\mu_1,\mu_2)$, and this is why the
necessary and sufficient stability condition for
$S(\lambda,\mu_1,\mu_2)$ must have more complex nature
than~\reff{eq:singleServer}.

\subsection{Eventually vanishing input rate function}
In most applications it is natural to assume that $\lambda(n)$ becomes
eventually zero for large $n$, so that the admission controller
strictly blocks all incoming traffic when the amount of jobs in node 2
becomes too large. In this case $\lim \lambda(n)=0$, so
Theorem~\ref{the:necCondLargeMu1} shows that for $\mu_1\geq \mu_2$,
$S(\lambda,\mu_1,\mu_2)$ is stable regardless of the shape of the
function $\lambda$. On the other hand, if node 1 is the bottleneck,
then Theorem~\ref{the:necCondSmallMu1} determines the stability of the
system, except in the critical case when $\sE\lambda(Z) = \mu_1$. Our
intuition about birth--death processes suggests that the system is
unstable also in this special case.  The validity of this intuition
will be proved next.  The key to proof is the following lemma which
shows that the stability of $S(\lambda,\mu_1,\mu_2)$ implies the
stability of the saturated system $S^*(\lambda,\mu_1,\mu_2)$, where
node~2 behaves as if node~1 never were empty.

\begin{lemma}
  \label{pro:keyEquiv}
  Assume $\mu_1<\mu_2$, and $\lambda(n)=0$ eventually for large $n$.
  If $S(\lambda,\mu_1,\mu_2)$ is stable, then so is the system
  $S^*(\lambda,\mu_1,\mu_2)$ generated by the transition rates
  \[
  q^*(x,y) = q(x,y) \, + \, \mu_1 1(x_1=0, \, y=x+e_2), \quad x\neq y.
  \]
\end{lemma}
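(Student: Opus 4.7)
The plan is to reduce the claim to a statement about $\sE\lambda(Z)$, using the auxiliary systems $S^N(\lambda,\mu_1,\mu_2)$ whose invariant distributions $\pi^N$ exist by Lemma~\ref{lem:ergodicEquiv}, since $q$ and $q^N$ differ only on the finite set $\{(0,x_2):x_2<N\}$. Lemma~\ref{lem:SStar} provides the explicit structure of $\pi^N$ needed to pin down $\sE\lambda(Z)$ in the limit $N\to\infty$.

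The first step is to exploit the geometric bound \reff{eq:SStarBound}, which gives $\sP_{\pi^N}(X_2^N=n)\leq(\mu_1/\mu_2)^n$, hence tightness of the $X_2^N$-marginals and, via \reff{eq:SStarCond}, their weak convergence to $Z\sim\geom(\mu_1/\mu_2)$. Since $\lambda$ is bounded and eventually zero, dominated convergence gives $\sE_{\pi^N}\lambda(X_2^N)\to\sE\lambda(Z)$. Passing to the limit in \reff{eq:SStarStability} and using $\sP_{\pi^N}(X_2^N>0)\to\mu_1/\mu_2$ (which itself follows by summing~\reff{eq:SStarBound} and applying the identity $\mu_1\sP_{\pi^N}(X_1^N>0)=\sE_{\pi^N}\lambda(X_2^N)$), one obtains
\[
  \sE\lambda(Z) \;=\; \mu_1 - \mu_1\lim_{N\to\infty}\sP_{\pi^N}(X_1^N=0,X_2^N<N).
\]
Hence the strict inequality $\sE\lambda(Z)<\mu_1$ is equivalent to a uniform-in-$N$ positive lower bound on $\sP_{\pi^N}(X_1^N=0,X_2^N<N)$.

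Establishing this uniform lower bound is the main obstacle, and is where the stability of $S$---rather than just that of each $S^N$---enters in an essential way. Since $\pi(X_1=0,X_2<N_0)>0$ by positive recurrence of $S$, and since $q$ and $q^N$ have identical rates at every state with $X_1>0$ or $X_2\geq N$, I expect a balance-equation comparison between $\pi$ and $\pi^N$ on the strip $\{x_2<N_0\}$---where the only discrepancy is a single extra saturation rate $\mu_1$ out of each state $(0,k)$---to yield $\sP_{\pi^N}(X_1^N=0,X_2^N=k)\geq c\,\pi(X_1=0,X_2=k)$ for each $k<N_0$ with $c>0$ independent of $N$. Summing over $k<N_0$ gives the desired uniform lower bound.

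Once $\sE\lambda(Z)<\mu_1$ has been proved, the stability of $S^*(\lambda,\mu_1,\mu_2)$ follows from a Foster--Lyapunov argument applied directly to $q^*$. A suitable candidate is $V^*(x)=x_1+\beta x_2+u(x_2)$, where $\beta>0$ captures the M/M/1 drift of the autonomous coordinate $X_2^*$, and $u$ is a bounded correction supported in $\{x_2<N_0\}$ chosen to absorb the positive contribution of $\lambda$ in this finite strip: on $\{x_2\geq N_0\}$ the drift of $V^*$ is $-\beta(\mu_2-\mu_1)-\mu_1\mathbf{1}(x_1>0)$, uniformly negative, and the finite linear system that determines $u$ on the strip is solvable precisely because $\sE\lambda(Z)<\mu_1$. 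Combined with the irreducibility of $S^*$ from $\lambda(0)>0$, Theorem~\ref{the:foster} yields the ergodicity of $S^*(\lambda,\mu_1,\mu_2)$.
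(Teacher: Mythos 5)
Your overall strategy inverts the logical structure the paper uses: the paper proves this lemma by a hitting-time comparison and only \emph{afterwards}, in Theorem~\ref{the:necSuffCond}, deduces the quantitative condition $\sE\lambda(Z)<\mu_1$ from the stability of $S^*$. You instead try to establish $\sE\lambda(Z)<\mu_1$ first and then build a Lyapunov function for $q^*$. That is a legitimate alternative route in principle, but the way you propose to cross the critical step does not work, and that step is precisely where the entire difficulty of the lemma sits.

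Concretely, the reduction to $\sE\lambda(Z)=\mu_1\bigl(1-\lim_N\sP_{\pi^N}(X_1^N=0,X_2^N<N)\bigr)$ is correct (it is essentially the proof of Theorem~\ref{the:necCondSmallMu1}, which already gives the non-strict inequality $\sE\lambda(Z)\leq\mu_1$). What you still have to show is that the limit is \emph{strictly} positive, and here you only write ``I expect a balance-equation comparison \ldots to yield $\sP_{\pi^N}(X_1^N=0,X_2^N=k)\geq c\,\pi(X_1=0,X_2=k)$ with $c$ independent of $N$.'' This is asserted, not proved, and there is no obvious reason it should hold: the set of states where $q$ and $q^N$ differ is $\{0\}\times[0,N)$, which grows with $N$, so no perturbation result of the type used in Lemma~\ref{lem:ergodicEquiv} applies uniformly. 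Moreover, the extra saturation transition out of each $(0,k)$ pushes mass toward larger $x_2$, so it is by no means clear that $\pi^N$ dominates a fixed multiple of $\pi$ on the strip. In the critical case $\sE\lambda(Z)=\mu_1$ the limit would in fact be zero, so any proof of strict positivity must secretly rule out that case --- which is exactly the content of the lemma you are trying to prove. Unless you can produce the uniform lower bound by an explicit argument, there is a genuine gap here.

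For contrast, the paper's proof avoids these limit-of-truncations issues entirely. It works with a single modified generator $q'$ (saturation turned on only for $x_2>K$, where $K$ is a level past which $\lambda$ vanishes), observes that inside $D=\{0\}\times[K+1,\infty)$ both $X$ and $X'$ move as explicit birth--death chains with rates $(\mu_1,\mu_2)$ and $(0,\mu_2)$ respectively, and compares the two expected return times to $x=(0,K+1)$ via the strong Markov property and the closed-form b--d formulas \reff{eq:keyTime}. Finiteness of $\sE_{x-e_2}T_x$ (from ergodicity of $X$) forces finiteness of $\sE_{x-e_2}T'_x$, hence of $\sE_xT'_x$, and $X'$ is ergodic. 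Passing from $X'$ to $X^*$ is then a finite perturbation handled by Lemma~\ref{lem:ergodicEquiv}. This sidesteps the need to prove $\sE\lambda(Z)<\mu_1$ in advance, which is exactly the step your proposal leaves open. Your final Lyapunov construction for $q^*$ also remains at the sketch level, but it is a much smaller issue than the missing uniform lower bound and is plausibly fixable along the lines of Section~2; the real defect is upstream.
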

\begin{proof}
  Fix a $K\in\Z_+$ so that $\lambda(n)=0$ for all $n>K$, and define
  the transition rates $q'$ by
  \[
  q'(x,y) = q(x,y) \, + \, \mu_1 1(x_1=0, \, x_2>K, \, y=x+e_2), \quad x\neq y.
  \]
  Because $q'(x)<\infty$ for all $x$, the rates $q'(x,y)$ define an
  irreducible Markov process $X'$ on $\Z_+^2\cup\{\kappa\}$.  The
  first step is to show that $X'$ is ergodic.  Note that set of states
  where $q$ and $q'$ differ is now given by $D(q,q') = \{0\}\times
  [K+1,\infty)$.  The key to the proof is to observe that the behavior
  of $X'$ inside $D=D(q,q')$ is similar to a birth--death process with
  birth rate $\mu_1$ and death rate $\mu_2$.  Denote $x=(0,K+1)$.
  Then since $\mu_1<\mu_2$, it follows that for all $y\in
  D\setminus{\{x\}}$,
  \begin{equation}
    \label{eq:keyTime}
    \sE_y T'_x = \frac{y_2-x_2}{\mu_2-\mu_1}.
  \end{equation}
  The ergodicity of $X$ implies $\sE_{x-e_2} T'_D = \sE_{x-e_2} T_D <
  \infty$. Next, since $\sP_{x-e_2}(T'_D \leq T'_x)=1$, we can compute
  using the strong Markov property and~\reff{eq:keyTime},
  \begin{equation}
    \label{eq:timePrime}
    \begin{aligned}
    \sE_{x-e_2} T'_x
    &= \sE_{x-e_2} T'_D + \sE_{x-e_2} (\sE_{X'(T'_D)}T'_x \, ; \ X'(T'_D) \neq x) \\
    &= \sE_{x-e_2} T'_D + \sE_{x-e_2} \, \frac{X'_2(T'_D) - x_2}{\mu_2-\mu_1}\\
    &= \sE_{x-e_2} T_D  + \sE_{x-e_2} \, \frac{X_2(T_D) - x_2}{\mu_2-\mu_1}.
    \end{aligned}
  \end{equation}
  Since $\sE_y T_x = (y_2-x_2)/\mu_2$ for all $y\in
  D\setminus{\{x\}}$, we find in a similar way that
  \begin{equation}
    \label{eq:timeOrig}
    \sE_{x-e_2} T_x = \sE_{x-e_2} T_D + \sE_{x-e_2} \frac{X_2(T_D) - x_2}{\mu_2}.
  \end{equation}
  Since $X$ is ergodic, comparison of~\reff{eq:timePrime}
  and~\reff{eq:timeOrig} shows that $\sE_{x-e_2} T'_x < \infty$.
  Conditioning on the first transition of $X'$ now yields
  \begin{align*}
    \sE_x T'_x
    &= \frac{1}{\mu_1+\mu_2} + \frac{\mu_1}{\mu_1+\mu_2}\sE_{x+e_2}T'_x + \frac{\mu_2}{\mu_1+\mu_2}\sE_{x-e_2} T'_x \\
    &= \frac{1}{\mu_1+\mu_2} + \frac{\mu_1}{\mu_1+\mu_2}\frac{1}{\mu_2-\mu_1}+ \frac{\mu_2}{\mu_1+\mu_2}\sE_{x-e_2} T'_x,
  \end{align*}
  showing that $\sE_x T'_x < \infty$. By irreducibility, it now follows
  that $X'$ is ergodic.

  Finally, note that the set $\overline{D}(q',q^*) \subset [0,1]
  \times [0,K+1]$ is finite. Thus, in light of
  Lemma~\ref{lem:ergodicEquiv} we may now conclude that the Markov
  process $X^*$ generated by $q^*(x,y)$ is ergodic.
\end{proof}

\begin{theorem}
  \label{the:necSuffCond}
  Assume that $\lambda(n) = 0$ eventually for large $n$.
  \begin{enumerate}[(i)]
  \item If $\mu_1<\mu_2$, $S(\lambda,\mu_1,\mu_2)$ is stable if and only if
    $\, \sE\lambda(Z)<\mu_1$ with $Z\sim\geom(\mu_1/\mu_2)$.
  \item If $\mu_1\geq\mu_2$, $S(\lambda,\mu_1,\mu_2)$ is always stable.
  \end{enumerate}
\end{theorem}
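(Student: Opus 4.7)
The plan is to treat the two regimes separately and to exploit the saturation machinery of Lemma~\ref{pro:keyEquiv} for the delicate direction.

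For part~(ii), I observe that $\mu_1\geq\mu_2$ gives $\mu_1/\mu_2\geq 1$, so Lemma~\ref{lem:geomDistDiverging} applies and yields $\varlimsup\sE\lambda(Z_n)\leq\varlimsup\lambda(n)=0$ since $\lambda$ eventually vanishes.  Because $0<\mu_2=\min(\mu_1,\mu_2)$, Theorem~\ref{the:suffCond} then delivers stability irrespective of the shape of $\lambda$.

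For part~(i), the ``if'' implication is the first implication of Theorem~\ref{the:necCondSmallMu1}: under $\mu_1<\mu_2$ the assumption $\sE\lambda(Z)<\mu_1$ directly gives stability (and $\sE\lambda(Z)$ is finite since $\lambda$ is eventually zero).  For the ``only if'' direction, my plan is to suppose that $S(\lambda,\mu_1,\mu_2)$ is stable and invoke Lemma~\ref{pro:keyEquiv} to conclude that the saturated system $S^*(\lambda,\mu_1,\mu_2)$ is also stable.  The key observation is that in $S^*$ the second coordinate $X_2^*$ evolves \emph{autonomously}: it jumps up at rate $\mu_1$ regardless of the value of $X_1^*$ (via the routing transition when $X_1^*>0$ and via the added saturation transition when $X_1^*=0$), and jumps down at rate $\mu_2$ whenever positive.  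Thus the marginal of $X_2^*$ is an M/M/1 chain with load $\mu_1/\mu_2<1$, so its stationary law is exactly $\geom(\mu_1/\mu_2)$, i.e.\ $X_2^*$ has the same distribution as $Z$.  Writing the stationary input--output balance at node~1 of $S^*$ then gives the identity $\sE\lambda(Z)=\sE\lambda(X_2^*)=\mu_1\,\sP(X_1^*>0)$.

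The step I expect to be the main obstacle is upgrading this to the \emph{strict} inequality $\sE\lambda(Z)<\mu_1$, i.e.\ ruling out the critical case $\sE\lambda(Z)=\mu_1$.  The plan is to note that $S^*$ is ergodic and irreducible with $(0,0)$ in its state space, so in stationarity $\sP(X_1^*=0)>0$ and hence $\sP(X_1^*>0)<1$.  Substituting this into the balance identity yields $\sE\lambda(Z)<\mu_1$, which contradicts any assumption of $\sE\lambda(Z)\geq\mu_1$ and completes the ``only if'' direction of part~(i).
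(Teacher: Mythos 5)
Your proposal is correct and follows essentially the same route as the paper: part~(ii) and the ``if'' direction of~(i) are delegated to Theorems~\ref{the:necCondLargeMu1} and~\ref{the:necCondSmallMu1}, and the ``only if'' direction passes through Lemma~\ref{pro:keyEquiv}, identifies $X_2^*\sim\geom(\mu_1/\mu_2)$, writes the node-1 rate balance $\sE\lambda(Z)=\mu_1\sP(X_1^*>0)$, and upgrades to strict inequality via $\sP(X_1^*=0)>0$. Your explicit observation that $X_2^*$ is an autonomous M/M/1 queue is a clean way to phrase what the paper handles via the balance equations, but it is the same argument.
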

\begin{proof}
  In light of Theorems~\ref{the:necCondSmallMu1}
  and~\ref{the:necCondLargeMu1}, all we need to show is that the
  stability of $S(\lambda,\mu_1,\mu_2)$ implies $\sE \lambda(Z) <
  \mu_1$ when $\mu_1<\mu_2$. So, assume $\mu_1<\mu_2$ and that
  $S(\lambda,\mu_1,\mu_2)$ is stable.  By Lemma~\ref{pro:keyEquiv}, so
  is $S^*(\lambda,\mu_1,\mu_2)$. From the balance equations for $X^*$
  it is easy to see that $X^*_2\sim\geom(\mu_1/\mu_2)$. Thus the
  stationary mean rate of jobs arriving to node~1 equals $\sE
  \lambda(Z)$, while the corresponding rate out is equal to $\mu_1
  \sP(X^*_1>0)$. Because these two quantities must be equal in a
  stable system, we conclude that
  \[
  \sE \lambda(Z) = \mu_1 \sP(X^*_1>0) < \mu_1,
  \]
  where the last inequality is strict because $\sP(X^*_1=0) > 0$ by
  the ergodicity of $X^*$.
\end{proof}

\section{Sensitivity analysis of the stability region}
This section focuses on the stability of the system subjected to
fluctuations in the system parameters.  The treatment here is
restricted to the case of eventually vanishing input rates, where
Theorem~\ref{the:necSuffCond} completely characterizes the stable
parameter region.

\subsection{Sensitivity with respect to varying service rates}
The next proposition shows that with nonincreasing input rates, the
stability of the system is preserved under speeding up of node 1.
\begin{proposition}
  \label{pro:speedServer1}
  Assume $\lambda$ is nonincreasing and $\lambda(n)=0$ eventually for
  large $n$.  Then for all $\mu_1'\geq\mu_1$,
  \[
  S(\lambda,\mu_1,\mu_2)  \ \text{is stable} \implies
  S(\lambda,\mu_1',\mu_2) \ \text{is stable}.
  \]
\end{proposition}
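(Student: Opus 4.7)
The plan is to appeal to the complete characterization in Theorem~\ref{the:necSuffCond} and reduce the claim to a monotonicity property of a single-variable polynomial. Whenever $\mu_1' \geq \mu_2$, Theorem~\ref{the:necSuffCond}(ii) immediately gives the stability of $S(\lambda,\mu_1',\mu_2)$, so I would begin by disposing of this case. The remaining case is $\mu_1 \leq \mu_1' < \mu_2$, in which $\mu_1 < \mu_2$ as well, and both systems fall under Theorem~\ref{the:necSuffCond}(i).

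Setting $z = \mu_1/\mu_2$ and $z' = \mu_1'/\mu_2$ (so that $z \leq z' < 1$) and letting $Z_z \sim \geom(z)$, the stability hypothesis reads $\sE\lambda(Z_z) < z\mu_2$, while the target is $\sE\lambda(Z_{z'}) < z'\mu_2$. I would introduce
\[
h(z) = z\mu_2 - (1-z)\sum_{n=0}^\infty \lambda(n)\, z^n,
\]
which is a polynomial in $z$ because $\lambda$ vanishes eventually, and stability at parameter $z$ is exactly $h(z) > 0$. An Abel-summation rearrangement gives
\[
h(z) = z\mu_2 - \lambda(0) + \sum_{n \geq 1} (\lambda(n-1)-\lambda(n))\, z^n,
\]
and the nonincreasing hypothesis on $\lambda$ makes every coefficient of this series nonnegative. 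Differentiating yields $h'(z) \geq \mu_2 > 0$ on $(0,1)$, so $h$ is strictly increasing, which forces $h(z') \geq h(z) > 0$. Theorem~\ref{the:necSuffCond}(i) then delivers the stability of $S(\lambda,\mu_1',\mu_2)$.

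The only genuinely nontrivial step is the monotonicity of $h$, and the key trick is the Abel-summation identity above: it turns the sign-ambiguous factor $(1-z)\sum \lambda(n) z^n$ into a power series whose coefficients inherit the nonnegativity of the successive differences $\lambda(n-1) - \lambda(n)$. Once this rewriting is in hand, the proposition follows from a one-line differentiation combined with the case split on $\mu_1'$ versus $\mu_2$.
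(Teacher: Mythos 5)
Your proof is correct and follows essentially the same route as the paper: dispose of the case $\mu_1'\geq\mu_2$ via Theorem~\ref{the:necSuffCond}(ii), then in the remaining case reduce stability to a monotonicity property of a polynomial in $\mu_1/\mu_2$, established by differentiation and the nonincreasing hypothesis on $\lambda$. The only cosmetic difference is that the paper differentiates $f(x)=(1-x)\sum_n\lambda(n)x^n$ and shows $f'\leq 0$, then separately invokes $\mu_1\leq\mu_1'$, whereas you bundle both observations by differentiating $h(z)=z\mu_2-f(z)$ directly; the Abel rearrangement you write out explicitly is the same one the paper uses implicitly to compute $f'$.
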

\begin{proof}
  Assume that $S(\lambda,\mu_1,\mu_2)$ is stable and let $\mu_1'\geq
  \mu_1$.  If $\mu_1'\geq \mu_2$, then $S(\lambda,\mu_1',\mu_2)$ is
  stable by Theorem~\ref{the:necSuffCond}. On the other hand, if
  $\mu_1'<\mu_2$, then also $\mu_1<\mu_2$, and the necessary condition
  of Theorem~\ref{the:necSuffCond} shows that $f(\mu_1/\mu_2)<\mu_1$,
  where
  \[
  f(x) = (1-x)\sum_{n=0}^\infty \lambda(n) x^n.
  \]
  Because the sequence $\lambda(n)$ is bounded and nonnegative, $f$
  is differentiable in $(0,1)$ with
  \[
  f'(x) = \sum_{n=0}^\infty (n+1) \left( \lambda(n+1)-\lambda(n)
  \right) x^n \leq 0,
  \]
  so that $f(\mu_1'/\mu_2) \leq f(\mu_1/\mu_2)$. It follows that
  $f(\mu_1'/\mu_2) < \mu_1'$, which guarantees the stability of
  $S(\lambda,\mu_1',\mu_2)$ by Theorem~\ref{the:necSuffCond}.
\end{proof}

To see why it is necessary to require $\lambda$ to be nonincreasing,
consider the following example.
\begin{example}
  \label{exa:nonMonotonic}
  Let $\mu_2=1$, and assume that $\lambda(n)=0$ for $n\geq 3$. Then
  $S(\lambda,\mu_1,\mu_2)$ is stable for all $\mu_1\geq 1$, and for $\mu_1\in (0,1)$, the
  stability of $S(\lambda,\mu_1,\mu_2)$ is equivalent to
  \begin{equation}
    \label{eq:nonMon}
    \mu_1^{-1} (1-\mu_1) \left(\lambda(0)+\lambda(1) \, \mu_1 +\lambda(2) \, \mu_1^2 \right) < 1.
  \end{equation}
  Figure~\ref{fig:exampleNonMonotonic} shows the the left-hand side
  of~\reff{eq:nonMon} as a function of $\mu_1$, where
  $\lambda(0)=\lambda(1)=\frac{1}{100}$ and $\lambda(2) = 5$.  The
  plot illustrates that by increasing the service rate $\mu_1$ from
  $\frac{1}{5}$ to $\frac{1}{2}$ destabilizes the system.
  \begin{figure}[h]
    \begin{center}
      \includegraphics[height=.2\textheight]{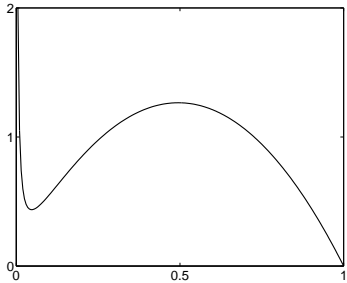}
      \caption{The left-hand side of~\reff{eq:nonMon} as a function of $\mu_1$.}
      \label{fig:exampleNonMonotonic}
    \end{center}
  \end{figure}
\end{example}

Alternatively, we may fix $\mu_1$ and see what happens when $\mu_2$
varies. The following proposition tells a rather surprising result:
Even with nonincreasing $\lambda$, acceleration of one of the servers
may indeed destabilize the system.  The physical intuition behind
Proposition~\ref{pro:speedServer2} is that when $\mu_2$ is very large,
the admission controller finds node 2 empty most of the time.  This
means that the input rate to the system is close to $\lambda(0)$.

\begin{proposition}
  \label{pro:speedServer2}
  Assume $\lambda$ is nonincreasing and $\lambda(n)=0$ eventually for
  large $n$ and fix $\mu_1>0$. Then
  \begin{itemize}
  \item for $\lambda(0)\leq\mu_1$, $S(\lambda,\mu_1,\mu_2)$ is stable for all $\mu_2>0$,
  \item for $\lambda(0)>\mu_1$, $S(\lambda,\mu_1,\mu_2)$ becomes
    eventually unstable for large $\mu_2$.
  \end{itemize}
\end{proposition}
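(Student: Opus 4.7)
The plan is to reduce everything to Theorem~\ref{the:necSuffCond}. When $\mu_2 \leq \mu_1$ stability is automatic, so only the regime $\mu_2 > \mu_1$ requires work. In that regime, writing $z = \mu_1/\mu_2 \in (0,1)$ and
\[
f(z) = (1-z) \sum_{n=0}^\infty \lambda(n) z^n = \sE\lambda(Z), \quad Z\sim\geom(\mu_1/\mu_2),
\]
stability is equivalent to $f(z) < \mu_1$, and instability to $f(z) > \mu_1$. Note that $f$ is in fact a polynomial in $z$ because $\lambda$ has finite support, so $f$ is continuous on $[0,1]$ with $f(0) = \lambda(0)$.

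For the first bullet, suppose $\lambda(0) \leq \mu_1$. The monotonicity of $\lambda$ gives
\[
f(z) \leq (1-z) \lambda(0) \sum_{n=0}^\infty z^n = \lambda(0)
\]
for every $z \in (0,1)$. Moreover the inequality is strict: if $\lambda \not\equiv 0$, then since $\lambda$ vanishes eventually there is some $n_0$ with $\lambda(n_0) < \lambda(0)$, making the bound strict; if $\lambda \equiv 0$, then $f \equiv 0 < \mu_1$ trivially. Hence $f(z) < \lambda(0) \leq \mu_1$ for all $z\in(0,1)$, and Theorem~\ref{the:necSuffCond} yields stability for every $\mu_2 > \mu_1$. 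Combining with the automatic case $\mu_2 \leq \mu_1$ gives stability for all $\mu_2 > 0$.

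For the second bullet, suppose $\lambda(0) > \mu_1$. As $\mu_2 \to \infty$ we have $z \to 0^+$, and by continuity $f(z) \to f(0) = \lambda(0) > \mu_1$. Thus for all sufficiently large $\mu_2$ (in particular for $\mu_2 > \mu_1$), $f(\mu_1/\mu_2) > \mu_1$, and Theorem~\ref{the:necSuffCond} gives instability of $S(\lambda,\mu_1,\mu_2)$.

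There is no real obstacle here: the entire argument is continuity of the polynomial $f$ near $0$ combined with the explicit stability criterion from Theorem~\ref{the:necSuffCond}. The only subtlety worth checking is the strictness of the inequality in the first bullet, which is why the hypothesis that $\lambda$ eventually vanishes plays a role (beyond just ensuring $f$ is well-defined).
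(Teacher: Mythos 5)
Your proof is correct and follows essentially the same approach as the paper: reduce to Theorem~\ref{the:necSuffCond}, bound $\sE\lambda(Z)\leq\lambda(0)$ via monotonicity of $\lambda$ with strictness coming from eventual vanishing, and use $\sE\lambda(Z)\to\lambda(0)$ as $\mu_2\to\infty$. The only cosmetic difference is that the paper truncates the sum at $n_0$ to get the explicit bound $\lambda(0)(1-(\mu_1/\mu_2)^{n_0+1})$, which makes strictness immediate, whereas you argue strictness by noting that some coefficient $\lambda(n_0)$ is strictly smaller than $\lambda(0)$; both are equally valid.
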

\begin{proof}
  Observe first that by Theorem~\ref{the:necSuffCond},
  $S(\lambda,\mu_1,\mu_2)$ is stable for all small $\mu_2\leq \mu_1$.
  To study the case with $\mu_2>\mu_1$, fix a number $n_0$ so that
  $\lambda(n)=0$ for all $n>n_0$. Then with $Z\sim \geom(\mu_1/\mu_2)$,
  \begin{equation}
    \label{eq:geomMean}
    \sE \lambda(Z) = (1-\mu_1/\mu_2) \sum_{n=0}^{n_0} \lambda(n) (\mu_1/\mu_2)^n.
  \end{equation}
  If $\lambda(0)\leq \mu_1$, then~\reff{eq:geomMean} implies that for
  all $\mu_2>\mu_1$,
  \[
  \sE \lambda(Z) \leq \lambda(0) (1-(\mu_1/\mu_2)^{n_0+1}) < \mu_1,
  \]
  which by Theorem~\ref{the:necSuffCond} is sufficient for stability.
  Moreover, the right-hand side of~\reff{eq:geomMean} converges to
  $\lambda(0)$ as $\mu_2\to\infty$. From this we can conclude that if
  $\lambda(0)>\mu_1$, then $\sE \lambda(Z) > \mu_1$ for large enough
  values of $\mu_2$. By Theorem~\ref{the:necSuffCond},
  $S(\lambda,\mu_1,\mu_2)$ is unstable for such $\mu_2$.
\end{proof}

\subsection{Phase partition for threshold-based admission control}
Consider the network with threshold-based admission control, and
assume without loss of generality that jobs arrive to the network at
unit rate. Denoting the threshold level by $K$, this system can be
modeled as $S(\lambda,\mu_1,\mu_2)$ with $\lambda(n) = 1(n\leq K)$.
Theorem~\ref{the:necSuffCond} now implies that for each
$K\in\Z_+\cup \{\infty\}$, the set of $(\mu_1,\mu_2)$ for which the system
is stable equals
\[
R_K = \{(\mu_1,\mu_2): \ 1-(\mu_1/\mu_2)^{K+1} < \min(\mu_1,\mu_2) \}.
\]
Since $R_K\supset R_{K+1}$ for all $K$, the stabilizable region is
given by $\cup_{K\leq\infty} R_K = R_0$, while $R_\infty =
\{(\mu_1,\mu_2): \min(\mu_1,\mu_2)>1\}$ represents the system with no
overload.  The positive orthant of $\R^2$ can now be partitioned into
four phases as follows:
\begin{itemize}
\item $A_1 = R_\infty$ is the region where the uncontrolled
  system is stable.
\item $A_2 = \cap_{K<\infty} R_K$ represents the region where any control
  stabilizes the overloaded system.
\item $A_3 = R_0 \setminus \cap_{K<\infty} R_K$ is the region where
  the overloaded system is stabilizable using strict enough admission
  control.
\item $A_4 = R_0^c$ is the region where the system cannot be stabilized.
\end{itemize}
\begin{figure}[h]
  \begin{center}
    \psfrag{mu1}{$\mu_1$}
    \psfrag{mu2}{$\mu_2$}
    \psfrag{A1}{$A_1$}
    \psfrag{A2}{$A_2$}
    \psfrag{A3}{$A_3$}
    \psfrag{A4}{$A_4$}
    \includegraphics[width=.5\textwidth]{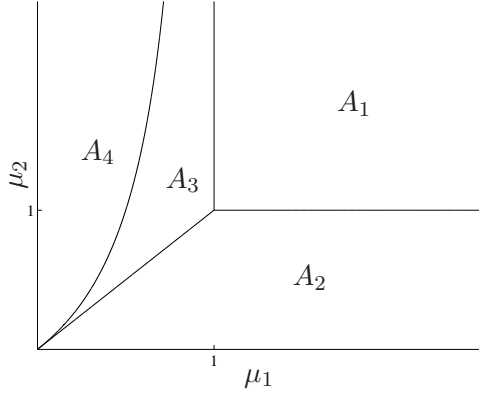}
    \caption{Phase diagram for threshold-based admission control.}
    \label{fig:phasePartition}
  \end{center}
\end{figure}
This partition is depicted in Figure~\ref{fig:phasePartition}. The
phase diagram clearly illustrates the content of
Propositions~\ref{pro:speedServer1} and~\ref{pro:speedServer2},
showing that accelerating server 1 drives the system towards more
stable regions, while speeding up server 2 may destabilize the
network.

\section{Conclusion}
This paper considered the problem of characterizing the stability
region of a two-node queueing network with feedback admission control.
For eventually vanishing input rates, the characterization was shown
to be complete.  It was also illustrated how the presence of feedback
signaling breaks down some typical monotonicity properties of queueing
networks, by showing that increasing service rates may destabilize the
network.

For a diverging input rate function and bottleneck at node 2, the
exact characterization of the stability region remains an open
problem. Other possible directions for future research include
generalizing the results for nonexponential service and inter-arrival
times, and considering queueing networks with more than two nodes.

\bibliographystyle{apalike}
\bibliography{lslReferences}

\newcommand{\SortNoop}[1]{}
\begin{thebibliography}{10}

\bibitem{adan1993}
{\sc Adan, I. J. B.~F., Wessels, J. and Zijm, W. H.~M.} (1993).
\newblock Compensation approach for two-dimensional {M}arkov processes.
\newblock {\em Adv. Appl. Probab.\/} {\bf 25,} 783--817.

\bibitem{altman2004}
{\sc Altman, E., Avrachenkov, K.~E. and N{\'u}{\~n}ez~Queija, R.} (2004).
\newblock Perturbation analysis for denumerable {Markov} chains with
  application to queueing models.
\newblock {\em Adv. Appl. Probab.\/} {\bf 36,} 839--853.

\bibitem{asmussen2003}
{\sc Asmussen, S.} (2003).
\newblock {\em Applied Probability and Queues} second~ed.
\newblock Springer.

\bibitem{bambos1989}
{\sc Bambos, N. and Walrand, J.} (1989).
\newblock On stability of state-dependent queues and acyclic queueing networks.
\newblock {\em Adv. Appl. Probab.\/} {\bf 21,} 618--701.

\bibitem{bremaud1999}
{\sc Br{\'e}maud, P.} (1999).
\newblock {\em Markov Chains: Gibbs Fields, Monte Carlo Simulation, and
  Queues}.
\newblock Springer.

\bibitem{foster1953}
{\sc Foster, F.~G.} (1953).
\newblock On the stochastic matrices associated with certain queuing processes.
\newblock {\em Ann. Math. Statist.\/} {\bf 24,} 355--360.

\bibitem{leskela2007}
{\sc Leskel\"{a}, L. and Resing, J.} (2007).
\newblock A tandem queueing network with feedback admission control.
\newblock In {\em Proc. First EuroFGI International Conference NET-COOP}. ed.
  T.~Chahed and B.~Tuffin.
\newblock vol.~4465 of {\em LNCS}.
\newblock Springer.
\newblock pp.~129--137.

\bibitem{meyn1993}
{\sc Meyn, S.~P. and Tweedie, R.~L.} (1993).
\newblock {\em Markov Chains and Stochastic Stability}.
\newblock Springer.

\bibitem{neuts1981}
{\sc Neuts, M.~F.} (1981).
\newblock {\em Matrix-Geometric Solutions in Stochastic Models}.
\newblock John Hopkins University Press.

\bibitem{tweedie1975}
{\sc Tweedie, R.~L.} (1975).
\newblock Sufficient conditions for regularity, recurrence and ergodicity of
  {Markov} processes.
\newblock {\em Math. Proc. Cambridge\/} {\bf 78,} 125--136.

\end{thebibliography}

\section*{Acknowledgements}
This work was funded by the Academy of Finland Teletronics II /
FIT project and the Finnish Graduate School in Stochastics. The author
would like to thank Ilkka Norros for his valuable help and advice
during the project, Jarmo Malinen for many inspiring discussions, and
the anonymous referee for helpful comments on improving the
presentation of the results.

\end{document}